\begin{document}

\hyphenation{non-archi-me-dean}
\newif\ifdraft \drafttrue
\draftfalse
\newcommand{\DRAFTNUMBER}{4}
\newcommand{\DATE}{July 4, 2007}
\newcommand{\TITLE}{Nonarchimedean Green Functions and
Dynamics on Projective Space}
\newcommand{\TITLERUNNING}{Nonarchimedean Green Functions and Dynamics}



\newtheorem{theorem}{Theorem}
\newtheorem{lemma}[theorem]{Lemma}
\newtheorem{conjecture}[theorem]{Conjecture}
\newtheorem{question}[theorem]{Question}
\newtheorem{proposition}[theorem]{Proposition}
\newtheorem{corollary}[theorem]{Corollary}
\newtheorem*{claim}{Claim}
\theoremstyle{definition}
\newtheorem*{definition}{Definition}
\newtheorem{example}[theorem]{Example}

\theoremstyle{remark}
\newtheorem{remark}[theorem]{Remark}
\newtheorem*{acknowledgement}{Acknowledgements}



\newenvironment{notation}[0]{%
  \begin{list}%
    {}%
    {\setlength{\itemindent}{0pt}
     \setlength{\labelwidth}{4\parindent}
     \setlength{\labelsep}{\parindent}
     \setlength{\leftmargin}{5\parindent}
     \setlength{\itemsep}{0pt}
     }%
   }%
  {\end{list}}

\newenvironment{parts}[0]{%
  \begin{list}{}%
    {\setlength{\itemindent}{0pt}
     \setlength{\labelwidth}{1.5\parindent}
     \setlength{\labelsep}{.5\parindent}
     \setlength{\leftmargin}{2\parindent}
     \setlength{\itemsep}{0pt}
     }%
   }%
  {\end{list}}
\newcommand{\Part}[1]{\item[\upshape#1]}

\renewcommand{\a}{\alpha}
\renewcommand{\b}{\beta}
\newcommand{\g}{\gamma}
\renewcommand{\d}{\delta}
\newcommand{\e}{\epsilon}
\newcommand{\f}{\varphi}
\newcommand{\bfphi}{{\boldsymbol{\f}}}
\renewcommand{\l}{\lambda}
\renewcommand{\k}{\kappa}
\newcommand{\lhat}{\hat\lambda}
\newcommand{\m}{\mu}
\newcommand{\bfmu}{{\boldsymbol{\mu}}}
\renewcommand{\o}{\omega}
\renewcommand{\r}{\rho}
\newcommand{\rbar}{{\bar\rho}}
\newcommand{\s}{\sigma}
\newcommand{\sbar}{{\bar\sigma}}

\renewcommand{\t}{\tau}
\newcommand{\z}{\zeta}

\newcommand{\D}{\Delta}
\newcommand{\G}{\Gamma}
\newcommand{\F}{\Phi}
\renewcommand{\L}{\Lambda}
\newcommand{\ga}{{\mathfrak{a}}}
\newcommand{\gb}{{\mathfrak{b}}}
\newcommand{\gn}{{\mathfrak{n}}}
\newcommand{\gp}{{\mathfrak{p}}}
\newcommand{\gP}{{\mathfrak{P}}}
\newcommand{\gq}{{\mathfrak{q}}}

\newcommand{\Abar}{{\bar A}}
\newcommand{\Ebar}{{\bar E}}
\newcommand{\Kbar}{{\bar K}}
\newcommand{\Pbar}{{\bar P}}
\newcommand{\Sbar}{{\bar S}}
\newcommand{\Tbar}{{\bar T}}
\newcommand{\ybar}{{\bar y}}
\newcommand{\phibar}{{\bar\f}}

\newcommand{\Acal}{{\mathcal A}}
\newcommand{\Bcal}{{\mathcal B}}
\newcommand{\Ccal}{{\mathcal C}}
\newcommand{\Dcal}{{\mathcal D}}
\newcommand{\Ecal}{{\mathcal E}}
\newcommand{\Fcal}{{\mathcal F}}
\newcommand{\Gcal}{{\mathcal G}}
\newcommand{\Hcal}{{\mathcal H}}
\newcommand{\Ical}{{\mathcal I}}
\newcommand{\Jcal}{{\mathcal J}}
\newcommand{\Kcal}{{\mathcal K}}
\newcommand{\Lcal}{{\mathcal L}}
\newcommand{\Mcal}{{\mathcal M}}
\newcommand{\Ncal}{{\mathcal N}}
\newcommand{\Ocal}{{\mathcal O}}
\newcommand{\Pcal}{{\mathcal P}}
\newcommand{\Qcal}{{\mathcal Q}}
\newcommand{\Rcal}{{\mathcal R}}
\newcommand{\Scal}{{\mathcal S}}
\newcommand{\Tcal}{{\mathcal T}}
\newcommand{\Ucal}{{\mathcal U}}
\newcommand{\Vcal}{{\mathcal V}}
\newcommand{\Wcal}{{\mathcal W}}
\newcommand{\Xcal}{{\mathcal X}}
\newcommand{\Ycal}{{\mathcal Y}}
\newcommand{\Zcal}{{\mathcal Z}}

\renewcommand{\AA}{\mathbb{A}}
\newcommand{\BB}{\mathbb{B}}
\newcommand{\CC}{\mathbb{C}}
\newcommand{\FF}{\mathbb{F}}
\newcommand{\GG}{\mathbb{G}}
\newcommand{\PP}{\mathbb{P}}
\newcommand{\QQ}{\mathbb{Q}}
\newcommand{\RR}{\mathbb{R}}
\newcommand{\ZZ}{\mathbb{Z}}

\newcommand{\bfa}{{\mathbf a}}
\newcommand{\bfb}{{\mathbf b}}
\newcommand{\bfc}{{\mathbf c}}
\newcommand{\bfe}{{\mathbf e}}
\newcommand{\bff}{{\mathbf f}}
\newcommand{\bfg}{{\mathbf g}}
\newcommand{\bfp}{{\mathbf p}}
\newcommand{\bfr}{{\mathbf r}}
\newcommand{\bfs}{{\mathbf s}}
\newcommand{\bft}{{\mathbf t}}
\newcommand{\bfu}{{\mathbf u}}
\newcommand{\bfv}{{\mathbf v}}
\newcommand{\bfw}{{\mathbf w}}
\newcommand{\bfx}{{\mathbf x}}
\newcommand{\bfy}{{\mathbf y}}
\newcommand{\bfz}{{\mathbf z}}
\newcommand{\bfA}{{\mathbf A}}
\newcommand{\bfF}{{\mathbf F}}
\newcommand{\bfB}{{\mathbf B}}
\newcommand{\bfG}{{\mathbf G}}
\newcommand{\bfI}{{\mathbf I}}
\newcommand{\bfM}{{\mathbf M}}
\newcommand{\bfzero}{{\boldsymbol{0}}}

\newcommand{\Aut}{\operatorname{Aut}}
\newcommand{\bad}{\textup{bad}}
\newcommand{\Disc}{\operatorname{Disc}}
\newcommand{\dist}{\Delta}  
\newcommand{\Div}{\operatorname{Div}}
\newcommand{\End}{\operatorname{End}}
\newcommand{\Family}{{\mathcal A}}  
\newcommand{\Fatou}{{\mathcal F}}
\newcommand{\Fbar}{{\bar{F}}}
\newcommand{\Gal}{\operatorname{Gal}}
\newcommand{\GL}{\operatorname{GL}}
\newcommand{\good}{\textup{good}}
\newcommand{\Index}{\operatorname{Index}}
\newcommand{\Image}{\operatorname{Image}}
\newcommand{\Julia}{{\mathcal J}}
\newcommand{\liftable}{{\textup{liftable}}}
\newcommand{\hhat}{{\hat h}}
\newcommand{\Ker}{{\operatorname{ker}}}
\newcommand{\Lift}{\operatorname{Lift}}
\newcommand{\mG}{\hat{g}}        
\newcommand{\MOD}[1]{~(\textup{mod}~#1)}
\newcommand{\Norm}{{\operatorname{\mathsf{N}}}}
\newcommand{\notdivide}{\nmid}
\newcommand{\normalsubgroup}{\triangleleft}
\newcommand{\odd}{{\operatorname{odd}}}
\newcommand{\onto}{\twoheadrightarrow}
\newcommand{\ord}{\operatorname{ord}}
\newcommand{\orbital}{\textup{orb-gd}}  
\newcommand{\Pic}{\operatorname{Pic}}
\newcommand{\Prob}{\operatorname{Prob}}
\newcommand{\Qbar}{{\bar{\QQ}}}
\newcommand{\rank}{\operatorname{rank}}
\newcommand{\Res}{{\operatorname{Res}}}
\newcommand{\Resultant}{\operatorname{Res}}
\newcommand{\rest}[2]{\left.{#1}\right\vert_{{#2}}}  
\renewcommand{\setminus}{\smallsetminus}
\newcommand{\Span}{\operatorname{Span}}
\newcommand{\Spec}{{\operatorname{Spec}}}
\newcommand{\tors}{{\textup{tors}}}
\newcommand{\Trace}{\operatorname{Trace}}
\newcommand{\UHP}{{\mathfrak{h}}}    

\newcommand{\longhookrightarrow}{\lhook\joinrel\longrightarrow}
\newcommand{\longonto}{\relbar\joinrel\twoheadrightarrow}

\newcommand{\halfquad}{\hspace{.5em}}


\title[\TITLERUNNING]{\TITLE}
\date{\DATE \ifdraft{} --- Draft \DRAFTNUMBER\fi}

\author[Shu Kawaguchi and Joseph H. Silverman]
  {Shu Kawaguchi and Joseph H. Silverman}
\email{kawaguch@math.kyoto-u.ac.jp, jhs@math.brown.edu}
\address{Department of Mathematics, Faculty of Science, Kyoto University, 
Kyoto, 606-8502, Japan}
\address{Mathematics Department, Box 1917
         Brown University, Providence, RI 02912 USA}
\subjclass{Primary: 32P05; Secondary: 11G25, 14G20, 32U35, 37F10, 37F50}
\keywords{nonarchimedean dynamics, Green function}
\thanks{The first author's research supported by MEXT
grant-in-aid for young scientists (B) 18740008}
\thanks{The second author's research supported by NSA grant H98230-04-1-0064}

\begin{abstract}
Let~$\f:\PP^N_K\to\PP^N_K$ be a morphism of degree~$d\ge2$ defined
over a field~$K$ that is algebraically closed field and complete with
respect to a nonarchimedean absolute value. We prove that a modified
Green function~$\mG_\f$ associated to~$\f$ is H\"older continuous
on~$\PP^N(K)$ and that the Fatou set~$\Fatou(\f)$ of~$\f$ is equal to
the set of points at which~$\mG_\F$ is locally constant.  Further,~$\mG_\f$
vanishes precisely on the set of points~$P$ such that~$\f$ has good
reduction at every point in the forward orbit~$\Ocal_\f(P)$ of~$P$. We
also prove that the iterates of~$\f$ are locally uniformly Lipschitz
on~$\Fatou(\f)$.
\end{abstract}


\maketitle

\section*{Introduction}
\label{section:intro}

Let$K$ be an algebraically closed field that is complete with respect
to a nontrivial nonarchimedean absolute value~\text{$|\,\cdot\,|$}.
An example of such a field is~$\CC_p$, the completion of the algebraic
closure of~$\QQ_p$.  
\par
Let~$\f:\PP_K^1\to\PP_K^1$ be a rational function of degree~$d\ge2$
defined over~$K$.  The absolute value on~$K$ induces a natural metric
on~$\PP^1(K)$, and nonarchimedean dynamics is the study of the
iterated action of~$\f$ on~$\PP^1(K)$ relative to this metric.  The
family of iterates~$\{\f^n\}_{n\ge0}$ divides~$\PP^1(K)$ into two
disjoint (possibly empty) subsets, the Fatou set~$\Fatou(\f)$ and the
Julia set~$\Julia(\f)$.  The Fatou set is the 
the largest open subset of~$\PP^1(K)$ on which the family is 
equicontinuous, and the Julia set is the complement of the Fatou set.  
There has been considerable 
interest in nonarchimedean dynamics on~$\PP^1$ in recent years, see
for example~\cite{%
MR1264116,%
MR1781331,%
MR1941304,%
MR1981035,%
MR1864626,%
MR2031456,%
MR1387667,%
MR1794277,%
MR1769981,%
MR2040006,%
MR2018827,%
MR2156656}.
\par
In this article we investigate aspects of nonarchimedean dynamics on
higher dimensional projective spaces. For points
\[
  P=(x_0:\cdots:x_N)\in \PP^N(K)\quad\text{and}\quad
  Q=(y_0:\cdots:y_N)\in\PP^N(K)
\]
we define the \emph{chordal distance} from $P$ to $Q$ to be
\[
  \D(P, Q) 
  = \frac{\displaystyle\max_{0\leq i, j \leq N}
  |x_i y_j - x_j y_i|}{\max\bigl \{|x_0|,\ldots,|x_N|\bigr\} 
  \max\bigl\{|y_0|,\ldots,|y_N|\bigr \}}.
\]
This defines a nonarchimedean metric on $\PP^N(K)$. As in the one
dimensional case, for any~$K$-morphism $\f: \PP^N \to\PP^N$ of degree
$d\ge2$ we define the Fatou set $\Fatou(\f)$ to be the largest open
set on which the iterates of~$\f$ are equicontinuous, 
and the Julia set $\Julia(\f)$ is the complement of the Fatou set.  (See
Section~\ref{section:fatou:julia} for the precise definitions.)
Also for convenience, for any vector~$x=(x_0,\ldots,x_N)\in K^{N+1}$,
we write~$\|x\|=\max|x_i|$ for the sup norm.
\par
Over the complex numbers, pluri-potential theory has played a key role
in the study of complex dynamics on $\PP^N(\CC)$. One of the primary goals
of this paper is to develop an analogous theory in the nonarchimedean
setting. For a given morphism~$\f:\PP^N_K\to\PP^N_K$ of degree~$d\ge2$,
let
\[
  \F : K^{N+1}\longrightarrow K^{N+1}
\]
be a lift of~$\f$. Then as in the complex case (cf.\ \cite{MR1760844})
one defines the \emph{Green function} (or \emph{potential function}) 
associated to~$\F$ by the limit
\begin{equation}
  \label{eqn:def:G}
  G_\F(x) = \lim_{n\to\infty} \frac{1}{d^n}\log\bigl\|\F(x)\big\|.
\end{equation}
The existence of the limit and the relation of~$G_\F$ to canonical
local height functions is explained in~\cite{KSEqualCanHt}.  We also
define a \emph{modified Green function} 
\begin{equation}
  \label{eqn:modGfun}
  \mG_\F: \PP^N(K) \to \RR, 
  \qquad \mG_\F(P) = G_\F(x) - \log \Vert x \Vert,
\end{equation}
that is well-defined independent of the choice of the lift $x\in
K^{N+1}$ of $P\in\PP^N(K)$.  The main results of this paper are 
summarized in the following theorem.

\begin{theorem}%
\label{thm:mainthmintro}
Let~$\f:\PP^N\to\PP^N$ be a morphism of degree~$d\ge2$ as above and
let~$\mG_\F$ be an associated Green function on~$\PP^N(K)$ as defined
by~\eqref{eqn:def:G} and~\eqref{eqn:modGfun}.
\begin{parts}
\Part{(a)}
The function $\mG_\F$ is H\"older continuous on $\PP^N(K)$. 
\Part{(b)}
The Fatou set of~$\f$ is characterized by
\[
  \Fatou(\f) = \bigl\{ P\in\PP^N(K) : 
    \text{$\mG_\F$ is locally constant at $P$} \bigr\}.
\]
\Part{(c)}
The Fatou set of~$\f$ is equal to the set of points~$P$
such that the iterates of~$\f$ are
locally uniformly Lipschitz at~$P$,
i.e., such that there is a neighborhood~$U$ of~$P$ and
a constant~$C$ so that
\[
  \hspace{1\parindent}
  \dist\bigl(\f^n(Q),\f^n(R)\bigr) \le C \dist(Q,R)
  \quad\text{for all $Q,R\in U$ and all $n\ge0$.}
\]
\Part{(d)}
$\mG_\F(P)=0$ if and only if~$\f$ has good reduction at every point in
the forward orbit~$\Ocal_\f(P)$.  Further, the set of such points is
an open set and is
contained in the Fatou set~$\Fatou(\f)$.
\end{parts}
\end{theorem}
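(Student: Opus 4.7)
The starting point of my plan is to unpack $\mG_\F$ via a cocycle. After scaling $\F$ so that all its coefficients have absolute value at most $1$, the function
\begin{equation*}
\psi(x)=\log\|\F(x)\|-d\log\|x\|
\end{equation*}
is homogeneous of degree zero, descends to a continuous function $\tilde\psi\le 0$ on $\PP^N(K)$ (by the ultrametric inequality), and is uniformly bounded below in terms of the resultant of $\F$. Iterating the defining limit for $G_\F$ and rearranging produces the convergent expansion
\begin{equation*}
\mG_\F(P)=\sum_{n=0}^{\infty}\frac{1}{d^{n+1}}\,\tilde\psi\bigl(\f^n(P)\bigr),
\end{equation*}
from which $\mG_\F$ is manifestly bounded, and from which one reads off the functional relation $\mG_\F\circ\f^n=d^n\mG_\F-\sum_{k<n}d^{n-1-k}\tilde\psi\circ\f^k$. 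Moreover $\{\tilde\psi=0\}$ is precisely the (clopen) locus where $\f$ has good reduction at a point, so the series already implies the equivalence in~(d): $\mG_\F(P)=0$ iff $\tilde\psi(\f^n(P))=0$ for every $n$ iff $\f$ has good reduction everywhere on $\Ocal_\f(P)$.

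For part~(a), the plan is to combine this expansion with two Lipschitz estimates: a global polynomial bound $\dist(\f(P),\f(Q))\le L\,\dist(P,Q)$ extracted from the coordinate-wise behavior of $\F$, and a Lipschitz bound $|\tilde\psi(P)-\tilde\psi(Q)|\le C\,\dist(P,Q)$ coming from the explicit shape of $\psi$. Splitting the series at the optimal index $n\approx\log(1/\dist(P,Q))/\log L$, the initial segment contributes $O\bigl(\dist(P,Q)^{\log d/\log L}\bigr)$ (after summing the geometric factors $(L/d)^k$), while the tail contributes $O(d^{-n})=O\bigl(\dist(P,Q)^{\log d/\log L}\bigr)$ from boundedness of $\tilde\psi$. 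This delivers H\"older continuity with exponent $\min(1,\log d/\log L)$.

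For parts~(b) and~(c), I would promote local constancy of $\mG_\F$ into an exact coordinatewise formula. If $\mG_\F\equiv c$ near $P$, then after shrinking the neighborhood each $\tilde\psi\circ\f^k$ also becomes locally constant near $P$, and the identity $\log\|\F^n(y)\|=d^n\log\|y\|+\sum_{k<n}d^{n-1-k}\psi(\F^k(y))$ yields $\log\|\F^n(y)\|=d^n(\log\|y\|+c)+O(1)$ with error \emph{uniform} in $n$. Substituting this into the chordal-metric formula produces $\dist(\f^n(Q),\f^n(R))\le C\,\dist(Q,R)$ with $C$ independent of $n$, simultaneously establishing the nontrivial direction of~(c) and, hence, equicontinuity at $P$. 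Conversely, if $P\in\Fatou(\f)$ then $\dist(\f^n P,\f^n Q)$ is uniformly bounded for $Q$ near $P$; combined with the H\"older continuity of $\mG_\F$ from~(a) and the functional equation above, this forces $\mG_\F(Q)=\mG_\F(P)$ on a neighborhood of $P$. The other direction of~(c) is immediate from the definition of equicontinuity.

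The remaining assertions in~(d)---that the good-reduction-along-orbit set is open and is contained in $\Fatou(\f)$---will follow from the observation that on $\{\tilde\psi=0\}$, $\f$ descends to a morphism over the residue field and acts as a nonarchimedean isometry. Consequently any $Q$ in a small enough neighborhood of a point $P$ whose entire forward orbit lies in $\{\tilde\psi=0\}$ has $\f^n(Q)$ remaining in $\{\tilde\psi=0\}$ for all $n$, and every iterate is Lipschitz with constant $1$ on this neighborhood, placing the set inside $\Fatou(\f)$ by~(c). The main technical obstacle will be part~(a): the H\"older estimate demands delicate bookkeeping linking the chordal distance, the sup-norm on lifts, the Lipschitz constants of $\F$ and $\tilde\psi$, and especially the resultant of $\F$, which governs how close $\|\F(x)\|$ can come to $0$ relative to $\|x\|^d$.
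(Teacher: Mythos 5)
Your overall architecture matches the paper closely: the cocycle $\psi$ (the paper's $dg_\F$), the telescoping series for $\mG_\F$, the Lipschitz bounds on $\f$ and $g_\F$ with the splitting-at-optimal-$n$ argument for H\"older continuity in~(a), and the characterization of $U^{\orbital}(\f)$ in~(d) are all in line with the paper's Theorems~\ref{theorem:f:Lipschitz}, \ref{theorem:g:Lipschitz}, \ref{thm:ghatholder}, Proposition~\ref{prop:modgreenfuncprops}, and Proposition~\ref{prop:G0orbitalFatou}.

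There is, however, a genuine gap in the hard implication of parts~(b) and~(c), namely that local constancy of $\mG_\F$ near~$P$ forces the iterates of~$\f$ to be \emph{locally uniformly Lipschitz} at~$P$. You correctly derive (modulo a misstatement --- it is not true, nor needed, that each $\tilde\psi\circ\f^k$ is individually locally constant; what you actually use is that the partial sums equal $d^n c + O(1)$ uniformly in~$n$, which follows directly from $\mG_\F\equiv c$) that $\log\|\F^n(y)\| = d^n(\log\|y\|+c)+O(1)$ with error uniform in~$n$ on a neighborhood of~$P$. But "substituting this into the chordal-metric formula" does not by itself produce the uniform Lipschitz bound $\dist(\f^n(Q),\f^n(R))\le C\,\dist(Q,R)$. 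Controlling the denominators $\|\F^n(y)\|\cdot\|\F^n(z)\|$ is necessary but gives no control over the $2\times2$ minors $\F^n_i(y)\F^n_j(z)-\F^n_j(y)\F^n_i(z)$ appearing in the numerator. The paper supplies the missing ingredient via Lemma~\ref{lemma:maximal:principle:etc} and Lemma~\ref{lemma:upper:bound}: one must renormalize $\F^n$ to $\Lambda_{\F,n}=h_n^{-d^n}\F^n$ with $h_n\in K^*$ chosen so that $\log|h_n|$ approximates $\mG_\F(P)$ (a nontrivial point because $\mG_\F(P)$ may not lie in $\log|K^*|$), dehomogenize to get a family of power series $\Psi_{ni}$ on a polydisk that are \emph{uniformly bounded} in Gauss norm, and then invoke the nonarchimedean maximum principle to convert the uniform sup bound into a uniform Lipschitz bound on the minors. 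Without this analytic input the chain from "$\|\F^n(y)\|$ is controlled" to "iterates are uniformly Lipschitz" does not close.

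A second, smaller point: in the converse direction ($P\in\Fatou(\f)\Rightarrow\mG_\F$ locally constant), invoking H\"older continuity of $\mG_\F$ is superfluous. The clean route is the paper's: by equicontinuity choose~$\d$ so that $\dist(\f^n P,\f^n Q)<|\Resultant(\f)|$ for all $n$ and all $Q\in\bar D_\d(P)$; then by the uniform local constancy of $g_\F$ (Theorem~\ref{theorem:g:Lipschitz}) one gets $g_\F(\f^n P)=g_\F(\f^n Q)$ for all~$n$, hence $\mG_\F(P)=\mG_\F(Q)$ from the series. Finally, in~(d) the reduced map is nonexpanding on $U^{\good}(\f)$, not an isometry; the weaker statement suffices for your argument, but the stronger claim is not correct.
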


As an immediate corollary of Theorem~\ref{thm:mainthmintro}(b) and the
fact (Corollary~\ref{cor:fisopen}) that~$\f$ is an open mapping in the
nonarchimedean topology, we obtain the invariance of the Fatou and
Julia sets.

\begin{corollary}
The Fatou set~$\Fatou(\f)$ and the Julia set~$\Julia(\f)$ are
forward and backward invariant for~$\f$.
\end{corollary}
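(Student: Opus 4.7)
The plan is to show that $\Fatou(\f)$ is both forward invariant ($\f(\Fatou(\f)) \subseteq \Fatou(\f)$) and backward invariant ($\f^{-1}(\Fatou(\f)) \subseteq \Fatou(\f)$); the corresponding invariances for $\Julia(\f) = \PP^N(K) \setminus \Fatou(\f)$ then follow by complementation. Recall that, by definition and by Theorem~\ref{thm:mainthmintro}(b), $\Fatou(\f)$ is the largest open subset of $\PP^N(K)$ on which the iterates $\{\f^n\}_{n \ge 0}$ form an equicontinuous family, and the plan is to work with this equicontinuity characterization.

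Backward invariance is a consequence of the continuity of $\f$. Suppose $\f(Q) \in \Fatou(\f)$ and let $V$ be an open neighborhood of $\f(Q)$ on which $\{\f^n\}$ is equicontinuous. Then $U := \f^{-1}(V)$ is an open neighborhood of $Q$, and for any $Q' \in U$ and $\epsilon > 0$, equicontinuity at $\f(Q') \in V$ supplies $\delta_1 > 0$ with $\dist(\f^n(R), \f^n(\f(Q'))) < \epsilon$ for all $R \in B(\f(Q'), \delta_1)$ and all $n \ge 0$, and the local Lipschitz continuity of $\f$ at $Q'$ supplies $\delta_2 \le \epsilon$ with $\f(B(Q', \delta_2)) \subseteq B(\f(Q'), \delta_1)$. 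Combining these yields $\dist(\f^m(Q''), \f^m(Q')) < \epsilon$ for all $Q'' \in B(Q', \delta_2)$ and all $m \ge 0$, so $\{\f^n\}$ is equicontinuous on $U$ and hence $Q \in \Fatou(\f)$.

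Forward invariance is where the openness of $\f$ (Corollary~\ref{cor:fisopen}) enters. Let $P \in \Fatou(\f)$ with $U$ an open neighborhood of $P$ on which $\{\f^n\}$ is equicontinuous. By openness, $V := \f(U)$ is an open neighborhood of $\f(P)$; the plan is to show $V \subseteq \Fatou(\f)$. Fix $R \in V$ together with a preimage $Q \in U$ satisfying $\f(Q) = R$. Given $\epsilon > 0$, equicontinuity at $Q$ supplies $\delta > 0$ with $\dist(\f^n(Q'), \f^n(Q)) < \epsilon$ for all $Q' \in U \cap B(Q, \delta)$ and all $n \ge 0$. Openness of $\f$ applied to the open set $U \cap B(Q, \delta)$ then makes $\f(U \cap B(Q, \delta))$ an open neighborhood of $R$, so it contains some ball $B(R, \delta')$. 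For any $R' \in B(R, \delta')$, writing $R' = \f(Q')$ with $Q' \in U \cap B(Q, \delta)$ gives $\dist(\f^n(R'), \f^n(R)) = \dist(\f^{n+1}(Q'), \f^{n+1}(Q)) < \epsilon$ for all $n \ge 0$. Hence $\{\f^n\}$ is equicontinuous at $R$, so $V \subseteq \Fatou(\f)$ and in particular $\f(P) \in \Fatou(\f)$.

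The main obstacle is forward invariance: to transfer equicontinuity from $U$ to $\f(U)$, one needs, given $R' \in \f(U)$ near $R = \f(P)$, to locate a preimage $Q' \in U$ close to the preimage $Q$ of $R$. This controlled lifting is precisely what the openness of $\f$ supplies in the nonarchimedean topology; without it, equicontinuity on $U$ would not propagate to $\f(U)$.
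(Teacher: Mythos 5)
Your proof is correct, but it takes a genuinely different route from the one the paper actually uses. The paper deduces invariance of $\Fatou(\f)$ from its characterization as the locus where the modified Green function $\mG_\F$ is locally constant (Theorem~\ref{thm:locally:const:and:F}), together with the transformation identity $\mG_\F(Q) = \frac{1}{d}\mG_\F(\f(Q)) + g_\F(Q)$ and the fact that $g_\F$ is constant on disks of radius less than $|\Resultant(\f)|$: for backward invariance one pushes local constancy of $\mG_\F \circ \f$ through this identity, and for forward invariance one uses the open mapping property to get a disk around $\f(P)$ inside $\f\bigl(\bar D_\delta(P)\bigr)$ and the same identity in reverse. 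You instead argue directly from the $\e$--$\d$ definition of equicontinuity: backward invariance from continuity of $\f$, and forward invariance by applying the open mapping property (Corollary~\ref{cor:fisopen}) to the set $U \cap B(Q,\d)$ to lift points $R'$ near $R=\f(Q)$ to preimages $Q'$ near $Q$. Both proofs hinge on openness for the forward direction, but your route is more elementary and does not require the Green function machinery; the paper's route, in exchange, is shorter once the Green function characterization is in hand and makes the radius $|\Resultant(\f)|$ entering the estimates explicit. One small point worth making explicit in the backward-invariance step: when you choose $\d_1$ via equicontinuity at $\f(Q')$, you should also shrink $\d_1$ so that $B\bigl(\f(Q'),\d_1\bigr) \subset V$, ensuring $B(Q',\d_2) \subset U$; this is automatic but deserves a word.
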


\begin{remark}
Parts~(a) and~(b) of Theorem~\ref{thm:mainthmintro} are analogous to
results in pluri-potential theory over~$\CC$. Thus if $\f: \PP_{\CC}^N
\to \PP_{\CC}^N$ is a morphism of degree~$d\ge2$ and $\F: \CC^{N+1}
\to\CC^{N+1}$ is a lift of $\f$, the classical Green function
$G_\F:\CC^{N+1} \to \RR$ associated to $\F$ is defined by the same
limit~\eqref{eqn:def:G} that we are using in the nonarchimedean setting.
It is then well known that~$G_\F$ is H\"older continuous on
$(\CC^{N+1})^*$ and that the Fatou set of~$\f$ is the image
in~$\PP^N(\CC)$ of the set
\[
  \bigl\{x \in (\CC^{N+1})^*:\text{$G_\F$ is pluri-harmonic at $x$} \bigr\}.
\]
See for example \cite{MR1760844}. 
\par
We note that applying~$d d^c$ to~$G_\F$ gives the \emph{Green
current}~$T_\F$ on $\PP^N(\CC)$ and that the invariant measure
associated to~$\f$ is obtained as an intersection of~$T_\F$.  The
invariant measure is of fundamental importance in studying the complex
dynamics of~$\f$. An analogous theory has been developed on~$\PP^1$ in
the nonarchimedean setting (see for
example~\cite{MR2244803,MR2244226,MR2221116,thuillier:thesis})
and it would be interesting to extend this to~$\PP^N$.
\par
Finally, we mention that the H\"older continuity of~$G_\F$ over~$\CC$
is used to estimate the Hausdorff dimension of the Julia set.
\end{remark}

The proof of Theorem~\ref{thm:mainthmintro} is given in
Theorem~\ref{thm:ghatholder},
Theorem~\ref{thm:locally:const:and:F}, and
Proposition~\ref{prop:G0orbitalFatou}.  The proofs of~(a) and~(b)
follow the complex proofs to some extent, but there are also parts of
the proofs that are specifically nonarchimedean, especially where
compactness arguments over~$\CC$ are not applicable to nonlocally
compact fields such as~$\CC_p$. Further, we are able to make most
constants in this article explicit in terms of the Macaulay resultant
of $\Phi$. (See Section~\ref{section:Lipschitz} for the definition and
basic properties of the Macaulay resultant.)

The organization of this paper is as follows. 
In Section~\ref{section:chordal:metric} we define the chordal metric
on $\PP^N(K)$ and prove some of its properties.
In Section~\ref{section:Lipschitz} we consider Lipschitz continuity
and show in particular that $\f: \PP^N \to \PP^N$ is Lipschitz
continuous with an explicit Lipschitz constant.
In Section~\ref{section:properties:green} we review the definition and
basic properties of the Green function~$G_\F$ and use them to deduce
various elementary properties of the modified Green function~$\mG_\F$.
In Section~\ref{section:Holder:continuity} we show that $\mG_\F$ is
H\"older continuous with explicit constants.
In Section~\ref{section:inversefncthm} we prove that morphisms are
open mappings in the nonarchimedean setting.
In Section~\ref{section:preliminaries} we recall some facts from
nonarchimedean analysis.
In Section~\ref{section:fatou:julia} we define the Fatou and Julia
sets in terms of equicontinuity for the family $\{\f^n\}$ with respect
to the chordal metric.
In Section~\ref{section:behavior} we characterize the Fatou
set in terms of the Green function and give some applications,
including the backward and forward invariance of~$\Fatou(\f)$
and~$\Julia(\f)$.
Finally in Section~\ref{section:goodredandfatou} we we relate the
Fatou set and the vanishing of~$\mG_\F$ to sets of points at
which~$\f$ has good reduction.

\begin{acknowledgement}
The authors would like to thank Matt Baker for his assistance. 
The authors would also like to thank Antoine Chambert-Loir,  
Tien-Cuong Dinh and Xander Faber for their helpful comments. 
\end{acknowledgement}


\section{The chordal metric on $\PP^N$}
\label{section:chordal:metric}

For the remainder of this paper we fix an algebraically closed field
field~$K$ that is complete with respect to a nontrivial nonarchimedean
absolute value~\text{$|\,\cdot\,|$}.  We extend the absolute value
on~$K$ to the sup norm on~$K^{N+1}$, which we denote by
\[
  \Vert x \Vert = \max\bigl\{|x_0|, \ldots, |x_N|\bigr\}
  \quad\text{for $x=(x_0, \ldots, x_N) \in K^{N+1}$}.
\]
We also write
\[
  \pi: (K^{N+1})^* \to \PP^N(K)
\]
for the natural projection map.

\begin{definition}
Let $P, Q \in \PP^N(K)$ and choose lifts $x,y\in (K^{N+1})^*$ for~$P$
and~$Q$, i.e., $\pi(x)=P$ and $\pi(y)=Q$.  The (\emph{nonarchimedean})
\emph{chordal distance from~$P$ to~$Q$} is defined by
\[
  \dist(P, Q)
  =
  \frac{\displaystyle\max_{0 \leq i, j \leq N} |x_i y_j - x_j y_i| }%
  {\Vert x \Vert \cdot \Vert y\Vert}.
\]
By homogeneity, it is clear that~$\dist(P,Q)$ is independent of the
choice of lifts for~$P$ and~$Q$.
\end{definition}

\begin{remark}
The chordal distance is an example of a $v$-adic (arithmetic) distance
function as defined in~\cite[\S3]{MR919501}, although we note that the
function~$\d$ defined in~\cite{MR919501} is logarithmic, i.e.,
$\d(P,Q)=-\log\dist(P,Q)$. Further, all of the distance and height
functions in~\cite{MR919501} are Weil functions in the sense that they
are only defined up to addition of a bounded function that depends on
the underlying variety. So to be precise, the logarithmic chordal
distance~$-\log\dist$ is a particular function in the equivalence
class of arithmetic distance functions~$\d$ on~$\PP^N$.
\end{remark}

\begin{lemma}
\label{lemma:spherdist}
The chordal distance $\dist$ defines a nonarchimedean metric on
$\PP^N(K)$.  Further, it is bounded by $\dist(P, Q) \leq 1$.
\end{lemma}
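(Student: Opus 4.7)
The plan is to verify each of the defining properties of a nonarchimedean metric directly from the formula, and then derive the boundedness $\dist(P,Q)\le 1$ as a simple consequence of the ultrametric inequality on $K$.

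\textbf{Boundedness and symmetry.} For any $i,j$, the ultrametric inequality gives
\[
  |x_i y_j - x_j y_i| \le \max\bigl(|x_i y_j|,|x_j y_i|\bigr) \le \|x\|\cdot\|y\|,
\]
so $\dist(P,Q)\le 1$. Symmetry is immediate from $|x_i y_j - x_j y_i| = |x_j y_i - x_i y_j|$.

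\textbf{Positivity.} If $P=Q$, we can take $y=x$ and every $2\times 2$ minor $x_iy_j-x_jy_i$ vanishes, so $\dist(P,Q)=0$. Conversely, if $\dist(P,Q)=0$, then $x_iy_j=x_jy_i$ for all $i,j$. Choosing an index $k$ with $|x_k|=\|x\|\ne 0$ shows $y=(y_k/x_k)\,x$, so $P=\pi(x)=\pi(y)=Q$.

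\textbf{Ultrametric triangle inequality.} This is the one step that requires an idea. Given $P,Q,R$ with lifts $x,y,z$, I would use the Pl\"ucker-type identity (which is a direct expansion)
\[
  y_k(x_i z_j - x_j z_i)
  = (x_i y_k - x_k y_i)z_j - (x_j y_k - x_k y_j)z_i
    + x_k(y_i z_j - y_j z_i),
\]
valid for any $k$. Choose $k$ so that $|y_k|=\|y\|$. Applying the ultrametric inequality to the right hand side and bounding each factor crudely by its sup-norm yields
\[
  \|y\|\cdot|x_i z_j - x_j z_i|
  \le \max\Bigl(\|z\|\cdot\max_{i,j}|x_iy_j-x_jy_i|,\;
                 \|x\|\cdot\max_{i,j}|y_iz_j-y_jz_i|\Bigr).
\]
Dividing by $\|x\|\cdot\|y\|\cdot\|z\|$ and then maximizing over $i,j$ on the left gives exactly
\[
  \dist(P,R) \le \max\bigl(\dist(P,Q),\dist(Q,R)\bigr).
\]

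\textbf{Main obstacle.} The only nontrivial point is the ultrametric triangle inequality, and the obstacle there is finding (or remembering) the algebraic identity expressing a minor of the $(x,z)$-pair as a combination of minors of the $(x,y)$- and $(y,z)$-pairs. Once that identity is written down, the proof is just the ultrametric inequality on $K$ together with the choice of $k$ maximizing $|y_k|$, which produces the factor of $\|y\|$ needed to cancel in the denominator.
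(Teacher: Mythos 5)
Your proposal is correct and follows essentially the same route as the paper's proof: both rely on the same Pl\"ucker-type identity (the paper writes it as $(x_i z_k - x_k z_i)y_j = (x_i y_j - x_j y_i)z_k + (y_i z_k - y_k z_i)x_j + (x_j y_k - x_k y_j)z_i$, which is your identity up to reordering terms) and both then pick an index where $|y|$ attains its sup norm and apply the ultrametric inequality. The only cosmetic difference is that the paper first normalizes the lifts to $\|x\|=\|y\|=\|z\|=1$, which lets it drop the denominators, whereas you carry the norms through explicitly.
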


\begin{proof}
It is immediate from the definition that~$\dist(P,Q)\ge0$ and
that it is equal to~$0$ if and only if~$P=Q$. Further, 
\[
 \max_{0\le i,j\le N} |x_i y_j - x_j y_i| 
 \leq \max_{0\le i,j\le N} \max\bigl\{|x_i y_j|, |x_j y_i|\bigr\}
 \leq \Vert x \Vert \cdot \Vert y\Vert,
\]
which proves that~$\dist(P,Q)\le 1$. It remains to verify that~$\dist$
satisfies the strong triangle inequality.
\par
Let $R \in \PP^N(K)$ be a third point and lift it to
$z\in(K^{N+1})^*$. Multiplying each lift by an appropriate element
of~$K^*$, we may normalize the lifts to satisfy
\[
  \Vert x \Vert =  \Vert y \Vert = \Vert z \Vert =1.
\]
Consider the identity
\begin{multline}
  \label{eqn:xizkxkzi}
  (x_i z_k - x_k z_i) y_j  \\
  = (x_i y_j - x_j y_i) z_k +
  (y_i z_k - y_k z_i) x_j +
  (x_j y_k - x_k y_j) z_i.  
\end{multline}
Since $\Vert y \Vert =1$, there is a $j_0$ with $|y_{j_0}| = 1$.
Then~\eqref{eqn:xizkxkzi} with $j = j_0$ gives
\[
  |x_i z_k - x_k z_i| \leq
  \max\{\dist(P, Q), \dist(Q, R)\}.  
\]
Taking the maximum over~$i$ and~$k$ yields the strong triangle
inequality,
\[
  \dist(P, R) \leq \max\{\dist(P, Q), \dist(Q, R)\}.
  \tag*{\qedsymbol}
\]
\renewcommand{\qedsymbol}{}
\end{proof}

In the remainder of this section we develop some basic properties of
the chordal metric on~$\PP^N(K)$.  We begin with some notation that
will be used throughout the remainder of this paper.
\par
Let $M \geq 1$ be an integer, typically equal to either $N$
or~$N+1$. For $a \in K^M$ and $r >0$, the {\em open polydisk} and the
{\em closed polydisk} centered at $a$ with radius $r$ are 
defined, respectively, by
\begin{align*}
  B(a, r) & = \{
  x \in K^M \;:\; \Vert x - a \Vert  <  r 
  \}, \\
  \bar{B}(a, r) & = \{
  x \in K^M \;:\; \Vert x - a \Vert  \leq  r 
  \}.
\end{align*}
Similarly, for $P \in \PP^N(K)$ and $1 \geq r > 0$, we define the 
{\em  open disk} and the {\em closed disk} centered at $P$ with 
radius $r$ to be, respectively,
\begin{align*}
  D_r(P) & = \{
  Q \in \PP^N(K) \ \;:\;  \D(P, Q) < r\}, \\
  \bar{D}_r(P) & = \{
  Q \in \PP^N(K) \ \;:\;  \D(P, Q) \leq r\}.
\end{align*}
Despite the terminology, all four of the
sets~$B(a,r)$,~$\bar{B}(a,r)$,~$D_r(P)$, and~$\bar{D}_r(P)$ are both
open and closed in the topology induced by~$\Vert\,\cdot\,\Vert$ on~$K^M$ and
by the chordal metric~$\dist$ on~$\PP^N(K)$.
We also embed~$K^N$ into~$\PP^N(K)$ via the map
\[
  \sigma: K^N \longhookrightarrow \PP^N(K), 
  \qquad (x_1, \ldots, x_N) \longmapsto (1: x_1: \cdots :x_N). 
\]

\begin{lemma}
\label{lemma:distance:points}
Let $P, Q \in \PP^N(K)$ be points satisfying $\dist(P,Q)<1$.  Choose
a lift~$x\in(K^{N+1})^*$ for~$P$ and a lift~$y\in(K^{N+1})^*$ for~$Q$.  
and let $0\le k\le N$ be an index. Then
\[
  |x_k| = \Vert x \Vert
  \quad\text{if and only if}\quad
 |y_k| = \Vert y \Vert.
\]
\end{lemma}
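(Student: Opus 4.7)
First I would rescale the two given lifts by nonzero scalars so that $\Vert x \Vert = \Vert y \Vert = 1$; this does not change $\dist(P,Q)$, nor does it change which coordinates realize the sup norm. With this normalization the hypothesis becomes
\[
  \max_{0\le i,j\le N} |x_i y_j - x_j y_i| = \dist(P,Q) < 1.
\]

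By the symmetry between $(P,x)$ and $(Q,y)$, it suffices to prove one implication: assume $|x_k| = 1$ and show $|y_k| = 1$. Since $\Vert y\Vert = 1$, pick an index $j_0$ with $|y_{j_0}| = 1$. I would then apply the strong triangle inequality to the identity
\[
  x_{j_0} y_k = x_k y_{j_0} \;-\; (x_k y_{j_0} - x_{j_0} y_k).
\]
The first term on the right has absolute value $|x_k||y_{j_0}| = 1$, while the second is at most $\dist(P,Q) < 1$; hence $|x_{j_0} y_k| = 1$. Since both $|x_{j_0}|$ and $|y_k|$ are at most $1$, this forces $|y_k| = 1 = \Vert y \Vert$, as desired.

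There is no real obstacle here: the whole argument is a one-line invocation of the ultrametric inequality applied to a single $2\times 2$ minor of the matrix $\begin{pmatrix} x_k & x_{j_0} \\ y_k & y_{j_0}\end{pmatrix}$. The only conceptual point is that the statement ``$|x_k| = \Vert x \Vert$'' depends on the lift only via the index, so one is free at the outset to rescale both lifts to unit sup norm, which is what makes the bound $\dist(P,Q)<1$ directly usable as an inequality on the Pl\"ucker-type quantities $x_i y_j - x_j y_i$.
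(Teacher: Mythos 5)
Your proof is correct and follows essentially the same route as the paper: normalize $\Vert x\Vert = \Vert y\Vert = 1$, pick $j_0$ with $|y_{j_0}|=1$, and use the ultrametric inequality on the minor $x_k y_{j_0} - x_{j_0} y_k$ to force $|x_{j_0} y_k| = 1$ and hence $|y_k|=1$. No differences worth noting.
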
 
\begin{proof}
We may assume that~$\|x\|=\|y\|=1$. Assume that $|x_k|=1$ and
choose an index~$j$ such that~$|y_j|=1$. Then
\[
  |x_ky_j-x_jy_k| \le \dist(P,Q) <  1
  \quad\text{and}\quad |x_ky_j|=1,
\]
so the strong triangle inequality implies that
$|x_jy_k|=1$. But~$|x_j|\le1$ and~$|y_k|\le1$, so we must
have~$|y_k|=1$.
\end{proof}

The next lemma shows that the usual metric~$\Vert \cdot \Vert$ and the
chordal metric~$\Delta$ are the same on the
closed unit polydisk~$\bar{B}(0,1)$ in~$K^N$. 

\begin{lemma}
\label{lemma:comparison:disks}
\textup{(a)}\enspace
The restriction of~$\sigma$ to~$\bar{B}(0, 1)$ is an isometry,
\[
  \D\bigl(\s(x), \s(y)\bigr) = \Vert x - y \Vert  
  \quad\text{for all $x, y\in \bar{B}(0, 1)$}. 
\]
\par\noindent
\textup{(b)}\enspace
Let $x \in \bar{B}(0, 1)$ and  $1> r >0$. Then the maps
\[
  \s:\bar{B}(x,r)\to\bar{D}_r\bigl(\s(x)\bigr)
  \qquad\text{and}\qquad
  \s:{B}(x,r)\to{D}_r\bigl(\s(x)\bigr)
\]
are isometric isomorphisms.
\end{lemma}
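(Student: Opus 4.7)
The plan is to exploit the fact that for any $x\in\bar B(0,1)$ the vector $(1,x_1,\dots,x_N)$ is already normalized, having sup norm equal to $1$, so it is the ``canonical'' lift of $\sigma(x)$. All estimates then reduce to direct computations using the strong triangle inequality.

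For part (a), I would fix $x,y\in\bar B(0,1)$ and use the lifts $\tilde x=(1,x_1,\dots,x_N)$ and $\tilde y=(1,y_1,\dots,y_N)$, which satisfy $\|\tilde x\|=\|\tilde y\|=1$. The numerator in the definition of $\dist$ splits into two kinds of terms. For $i=0$, $j\ge 1$, the term $|\tilde x_0\tilde y_j-\tilde x_j\tilde y_0|=|y_j-x_j|$, and the maximum of these over $j$ is exactly $\|x-y\|$. For $i,j\ge 1$, the identity
\[
x_i y_j - x_j y_i = x_i(y_j-x_j) + x_j(x_i-y_i)
\]
together with $|x_i|,|x_j|\le 1$ gives $|x_iy_j-x_jy_i|\le\|x-y\|$. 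Combining the two cases yields $\dist(\sigma(x),\sigma(y))=\|x-y\|$.

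For part (b), the forward inclusion is immediate from (a) once I observe that if $y\in\bar B(x,r)$ with $r<1$ and $x\in\bar B(0,1)$, then $\|y\|\le\max(\|x\|,\|y-x\|)\le 1$, so $y$ also lies in $\bar B(0,1)$ and (a) applies. For the reverse inclusion, given $Q\in\bar D_r(\sigma(x))$ with $r<1$, I take any lift $z\in(K^{N+1})^*$ of $Q$. Since the canonical lift $\tilde x$ of $\sigma(x)$ satisfies $|\tilde x_0|=\|\tilde x\|=1$ and $\dist(Q,\sigma(x))\le r<1$, Lemma~\ref{lemma:distance:points} forces $|z_0|=\|z\|$. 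Rescaling $z$ by $z_0^{-1}$, I may assume $z_0=1$, and then $(z_1,\dots,z_N)\in\bar B(0,1)$ with $\sigma(z_1,\dots,z_N)=Q$. Part (a) now gives $\|(z_1,\dots,z_N)-x\|=\dist(Q,\sigma(x))\le r$, so $Q$ is in the image of $\bar B(x,r)$ under $\sigma$. The same argument with strict inequalities handles the open disks.

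The only place requiring care is the reverse inclusion in (b): one must produce a lift of $Q$ whose leading coordinate is $1$, and this is exactly what Lemma~\ref{lemma:distance:points} provides once $r<1$. Everything else is either a normalization or a direct application of the strong triangle inequality, so I do not expect a genuine obstacle.
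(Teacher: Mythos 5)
Your proof is correct and follows essentially the same route as the paper: for (a) both arguments normalize the lifts to have leading coordinate $1$ and sup norm $1$, extract the lower bound from the $i=0$ (or $j=0$) terms, and obtain the upper bound from the identity $x_iy_j-x_jy_i = x_i(y_j-x_j)+x_j(x_i-y_i)$; for (b) both establish the forward inclusion by noting $\bar{B}(x,r)\subset\bar{B}(0,1)$ and invoking (a), and establish surjectivity by using Lemma~\ref{lemma:distance:points} to find a lift of $Q$ with unit leading coordinate, rescale it, and apply (a) again. There is no substantive difference.
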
 
\begin{proof}
Let~$x,y\in\bar{B}(0,1)$.  Then $\Vert \s(x) \Vert = \Vert \s(y) \Vert
=1$, so 
\[
  \D\bigl(\sigma(x), \sigma(y)\bigr) = 
  \max_{0 \leq i,j \leq N}\bigl\{|x_iy_j - x_jy_i|\bigr\},
\]
where for convenience we set~$x_0=y_0=1$. 
In particular, putting~$j=0$ gives
\[
  \D\bigl(\sigma(x), \sigma(y)\bigr) \ge
  \max_{0 \leq i \leq N}\bigl\{|x_i - y_i|\bigr\}
  = \|x-y\|.
\]
Further, we note that
\[
  |x_iy_j - x_jy_i|
  = \bigl| x_i (y_j-x_j) + x_j (x_i - y_i) \bigr|
  \le \max\bigl\{ |y_j-x_j|, |x_i - y_i| \bigr\}.
\]
Taking the maximum over all~$i$ and~$j$ gives
\[
  \D(\sigma(x), \sigma(y)) 
  \le \max_{0\le i\le N} |x_i-y_i|
  = \Vert x - y \Vert,.
\]
which gives the opposite inequality and completes the
proof of~(a).
\par
By assumption $x\in\bar{B}(0,1)$ and $r<1$, so the triangle inequality
implies that~$\bar{B}(x,r)\subset\bar{B}(0,1)$.  Then~(a) tells us that~$\s$
is an isometry on~$\bar{B}(x,r)$, so in particular~$\s$ maps~$\bar{B}(x,r)$
injectively and isometrically into~$\bar{D}_r\bigl(\s(x)\bigr)$.
\par
It remains to check that the map is surjective.  Let~$Q \in
\bar{D}_r(\sigma(x))$ and lift~$Q$ to~$b = (b_0, b_1, \ldots, b_N)$.
We know that~$\|\s(x)\|=1$ and that the first coordinate of~$\s(x)$
equals~$1$, and also $\dist(Q,\sigma(x))\le r<1$, so
Lemma~\ref{lemma:distance:points} tells us the~$|b_0|=\|b\|$.  Then
the point
\[
  y = \left(\frac{b_1}{b_0},\frac{b_2}{b_0},\ldots,\frac{b_N}{b_0}\right)
  \quad\text{is in $\bar{B}(0,1)$ and satisfies $\s(y)=Q$.}
\]
Finally, since~$x,y\in\bar{B}(0,1)$,  we can use~(a) again to compute
\[
  \| x - y \| 
  = \dist\bigl(\s(x),\s(y)\bigr) 
  = \dist\bigl(\s(x),Q\bigr) 
  \le r,
\]
so in fact~$y\in\bar{B}(x,r)$. This proves
that~$\s\bigl(\bar{B}(x,r)\bigr)=\bar{D}_r\bigl(\sigma(x)\bigr)$,
which completes the first part of~(b). The second part is proven
similarly.
\end{proof}

\begin{proposition}
$\PP^N(K)$ is complete with respect to the chordal metric~$\Delta$. 
\textup(As always, we are assuming that the field~$K$ is complete.\textup)
\end{proposition}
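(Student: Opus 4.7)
The plan is to reduce completeness of $\PP^N(K)$ to completeness of $K^N$, which follows from the assumed completeness of $K$, by showing that a Cauchy sequence must eventually be confined to a single standard affine chart on which $\sigma$ (or its analogue) is an isometry.

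Let $(P_n)_{n \ge 1}$ be a Cauchy sequence in $\PP^N(K)$ with respect to $\Delta$. First I would fix $n_0$ large enough that $\Delta(P_n, P_m) < 1$ for all $n, m \ge n_0$. For each $n \ge n_0$, choose a lift $x^{(n)} \in (K^{N+1})^*$ of $P_n$ normalized so that $\Vert x^{(n)}\Vert = 1$, and select an index $k$ with $|x_k^{(n_0)}| = 1$. By Lemma~\ref{lemma:distance:points}, the same index $k$ then satisfies $|x_k^{(n)}| = 1$ for every $n \ge n_0$, so each such $P_n$ lies in the standard affine chart $\{x_k \ne 0\}$.

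Next, let $\sigma_k : K^N \hookrightarrow \PP^N(K)$ denote the obvious variant of $\sigma$ that places a $1$ in the $k$-th coordinate; by relabeling coordinates, Lemma~\ref{lemma:comparison:disks}(a) applies verbatim to $\sigma_k$, so $\sigma_k$ is an isometry on $\bar{B}(0,1) \subset K^N$. For each $n \ge n_0$ define $y^{(n)} \in \bar{B}(0,1)$ to be the unique point with $\sigma_k(y^{(n)}) = P_n$ (its coordinates are the ratios $x_i^{(n)}/x_k^{(n)}$, which have absolute value $\le 1$). Because $\sigma_k$ is an isometry on $\bar{B}(0,1)$, the sequence $(y^{(n)})_{n \ge n_0}$ is Cauchy in $\bar{B}(0,1)$.

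Finally, I would invoke completeness of the ground field: since $K$ is complete with respect to $|\,\cdot\,|$, the space $K^N$ is complete with respect to $\Vert\,\cdot\,\Vert$, and $\bar{B}(0,1)$ is closed in $K^N$ and therefore itself complete. Hence $y^{(n)} \to y$ for some $y \in \bar{B}(0,1)$, and applying the isometry $\sigma_k$ gives $P_n \to \sigma_k(y)$ in $\PP^N(K)$. There is no real obstacle here; the only subtle point, handled by Lemma~\ref{lemma:distance:points}, is verifying that the Cauchy condition forces a single affine chart to contain the tail of the sequence, after which Lemma~\ref{lemma:comparison:disks} translates everything into a question about $K^N$.
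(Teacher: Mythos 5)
Your proof is correct and takes essentially the same approach as the paper: confine the tail of the Cauchy sequence to a single affine chart and reduce to completeness of a closed polydisk in $K^N$. The only cosmetic difference is that you invoke Lemma~\ref{lemma:distance:points} and Lemma~\ref{lemma:comparison:disks}(a) directly, whereas the paper cites Lemma~\ref{lemma:comparison:disks}(b), which packages those same facts into an isometric isomorphism $\bar{B}(x,r)\cong\bar{D}_r(P_n)$.
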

\begin{proof}
Fix some~$r<1$, say~$r=\frac12$.  Let~$(P_i)_{i\ge1}$ be a Cauchy
sequence in~$\PP^N(K)$ and fix an~$n$ so that~$\dist(P_i,P_j)\le r$
for all~$i,j\ge n$. In particular, the truncated sequence~$(P_i)_{i\ge
n}$ lies in the disk~$\bar{D}_r(P_n)$. Reordering the coordinates if
necessary, we can assume that there is a lift~$x\in\bar{B}(0,1)$
of~$P_n$. Then Lemma~\ref{lemma:comparison:disks}(b) tells us
that~$\bar{D}_r(P_n)$ is isometrically isomorphic
to~$B(x,r)$. But~$B(x,r)\subset K^N$ and~$K^N$ is complete,
hence~$\bar{D}_r(P_n)$ is also complete.
\end{proof}

\section{Lipschitz continuity of morphisms}
\label{section:Lipschitz}

In this and subsequent sections, we say that an element~$a\in K$ is
\emph{$K$-integral} if~$|a|\le1$ and we say that~$a$ is a
\emph{$K$-unit} if~$|a|=1$.

Associated to any collection of homogeneous polynomials
\[
  \F=(\F_0,\ldots,\F_N):\AA^{N+1}\longrightarrow\AA^{N+1}
\]
in~$N+1$~variables is a polynomial~$\Resultant(\F)$ (with integer
coefficients) in the coefficients of~$\F_0,\ldots,\F_N$ whose
vanishing is equivalent to the collection $\F_0,\ldots,\F_N$ having a
nontrivial common zero.  See~\cite[\S1.1]{KSEqualCanHt} for a summary
of the basic properties of this \emph{Macaulay
resultant}~$\Resultant(\F)$ and~\cite{MR1142904} for full details and
proofs.  We recall the following useful result.

\begin{proposition}
\label{prop:ResFxdleFlexd}
Let~$\F_0,\ldots,\F_N\in K[X_0,\ldots,X_N]$ be a 
collection of homogeneous polynomials
with $K$-integral coefficients.  Then
\[
  |\Resultant(\F)|\cdot\|x\|^d \le \|\F(x)\| \le \|x\|^d
  \qquad\text{for all $x\in \AA^{N+1}(K)$.}
\]
\end{proposition}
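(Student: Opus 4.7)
The plan is to establish the two inequalities separately: the upper bound is essentially a direct application of the ultrametric inequality, while the lower bound requires the key algebraic property of the Macaulay resultant.

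For the upper bound $\|\F(x)\| \le \|x\|^d$, I would just write each homogeneous component
\[
  \F_i(X) = \sum_{|\bfe|=d} c_{i,\bfe} X_0^{e_0}\cdots X_N^{e_N}
\]
with $|c_{i,\bfe}|\le 1$ by hypothesis, and observe that for any $x\in K^{N+1}$ each monomial $x_0^{e_0}\cdots x_N^{e_N}$ satisfies $|x_0^{e_0}\cdots x_N^{e_N}|\le \|x\|^d$. The strong triangle inequality then gives $|\F_i(x)|\le\|x\|^d$ for each $i$, and taking the maximum in $i$ yields the bound.

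For the lower bound, I would invoke the fundamental algebraic identity satisfied by the Macaulay resultant (which is the content cited from \cite{MR1142904} and summarized in \cite[\S1.1]{KSEqualCanHt}): for each index $i\in\{0,\ldots,N\}$ there is an integer $D_i\ge d$ and homogeneous polynomials $g_{ij}(X)\in\ZZ\bigl[\{c_{k,\bfe}\}\bigr][X_0,\ldots,X_N]$ of degree $D_i-d$, whose coefficients are polynomials with integer coefficients in the coefficients of $\F$, such that
\[
  \Resultant(\F)\cdot X_i^{D_i} \;=\; \sum_{j=0}^{N} g_{ij}(X)\,\F_j(X).
\]
Since the $c_{k,\bfe}$ are $K$-integral and the $g_{ij}$ have integer-polynomial dependence on them, the coefficients of $g_{ij}(X)$ are also $K$-integral; hence by the argument of the upper bound, $|g_{ij}(x)|\le\|x\|^{D_i-d}$. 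Applying $|\,\cdot\,|$ to the identity and using the ultrametric inequality gives
\[
  |\Resultant(\F)|\cdot|x_i|^{D_i}
  \;\le\;\max_j|g_{ij}(x)|\cdot|\F_j(x)|
  \;\le\;\|x\|^{D_i-d}\cdot\|\F(x)\|.
\]
Taking the maximum over $i$ (and choosing $D=\max_i D_i$, adjusting exponents by multiplying through by $\|x\|^{D-D_i}$) yields $|\Resultant(\F)|\cdot\|x\|^D\le\|x\|^{D-d}\cdot\|\F(x)\|$, and canceling $\|x\|^{D-d}$ (the case $x=\bfzero$ is trivial) gives the claimed bound.

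The main obstacle, and the reason the paper quotes this as a standard fact, is the algebraic identity expressing $\Resultant(\F)\cdot X_i^{D_i}$ as an element of the ideal $(\F_0,\ldots,\F_N)$ with cofactors having integer-polynomial dependence on the coefficients of $\F$; everything else is formal manipulation with the nonarchimedean absolute value. I would simply cite this from \cite{MR1142904} rather than reprove it, since the construction of the Macaulay resultant and its associated Bezout-type identity is a substantial piece of elimination theory in its own right.
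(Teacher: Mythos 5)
Your proof is correct, and it is the standard argument. Note that the paper does not actually prove this proposition: it cites \cite[Proposition~6(b)]{KSEqualCanHt} and moves on, so there is no ``paper's proof'' to diverge from. What you have written out is precisely the argument behind that citation. The upper bound is the ultrametric inequality applied monomial by monomial. The lower bound rests on the Macaulay--Jouanolou Bezout-type identity
\[
  \Resultant(\F)\cdot X_i^{D_i} \;=\; \sum_{j=0}^{N} g_{ij}(X)\,\F_j(X),
\]
with cofactors $g_{ij}$ whose coefficients are universal integer polynomials in the coefficients of $\F$; this is exactly the nontrivial input from \cite{MR1142904}, and you are right that it must be cited rather than rederived. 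Your bookkeeping at the end (normalizing the exponents to a common $D$ and selecting the index attaining $\|x\|$) correctly converts the bound on $|x_i|^{D_i}$ into one on $\|x\|^D$ and then cancels $\|x\|^{D-d}$. One minor simplification you could note: in the Macaulay construction the $D_i$ can all be taken equal to the critical degree $(N+1)(d-1)+1$, which removes the need for the exponent-adjustment step, but the argument as you wrote it is valid regardless.
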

\begin{proof}
See~\cite[Proposition~6(b)]{KSEqualCanHt}.
\end{proof}

\begin{definition}
Let $\f: \PP^N_K \to \PP^N_K$ be a morphism defined over~$K$ and let
$\F: \AA^{N+1}_K \to \AA^{N+1}_K$ be a lift of~$\f$. We say that~$\F$
is a \emph{minimal lift of~$\f$} if all of its coefficients are
$K$-integral and at least one coefficient is a $K$-unit. Any two minimal lifts
differ by multiplication by a $K$-unit.
\par
We define a \emph{minimal resultant~$\Resultant(\f)$ of~$\f$} to be
the resultant of a minimal lift of~$\f$. Note that~$\Resultant(\f)$ is
well defined up to multiplication by a power of a $K$-unit, so in
particular, the absolute value $|\Resultant(\f)|$ is well defined
independent of the chosen minimal lift.
\end{definition}

\begin{definition}
Let $\F=(\F_0,\ldots,\F_N):K^{N+1}\to K^{N+1}$ be 
a lift of~$\f:\PP^N_K\to\PP^N_K$. For each $i = 0, \ldots, N$, 
we define the \emph{norm of~$\F_i$} to be the 
maximum of the absolute values of the coefficients of~$\F_i$. 
In other words, if $\F_i = \sum
a_{i,j_0,\dots,j_N}x_0^{j_0}\cdots x_N^{j_N}$, then 
\[
  \|\F_i\| = \sup_{j_0,\ldots,j_N\ge0} 
             \left|a_{i,j_0,\dots,j_N}\right|. 
\]
We define the \emph{norm of~$\F$} by $\|\F\| = \sup_{0\le i\le N} \|\F_i\|$. 
In particular, the condition~$\|\F\|=1$ is equivalent to~$\F$ being a
minimal lift of~$\f$. 
\end{definition}

We now prove that morphisms of~$\PP^N$ over nonarchimedean fields are
Lipschitz continuous and give an explicit Lipschitz constant.

\begin{theorem}
\label{theorem:f:Lipschitz}
Let
$\f: \PP^N \to \PP^N$ be a morphism of degree $d \geq 2$ defined over~$K$.  
Then~$\f$ is Lipschitz continuous with respect to the chordal metric.  More
precisely,
\begin{equation}
\label{eqn:f:Lipschitz}
  \dist(\f(P), \f(Q)) \leq |\Resultant(\f)|^{-2} \dist(P, Q)
  \qquad\text{for all $P, Q \in \PP^N(K)$},
\end{equation}
where~$\Resultant(\f)$ is a minimal resultant of~$\f$.
\end{theorem}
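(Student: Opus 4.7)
The plan is to work with a minimal lift $\F=(\F_0,\ldots,\F_N)$ of $\f$ and to estimate the numerator and denominator of
\[
  \dist\bigl(\f(P),\f(Q)\bigr)
  = \frac{\max_{i,j}\bigl|\F_i(x)\F_j(y)-\F_j(x)\F_i(y)\bigr|}{\|\F(x)\|\cdot\|\F(y)\|}
\]
separately, where $x,y\in(K^{N+1})^*$ are lifts of $P$ and $Q$. The denominator is handled immediately by applying Proposition~\ref{prop:ResFxdleFlexd} to each of $x$ and $y$, which yields
\[
  \|\F(x)\|\cdot\|\F(y)\| \;\ge\; |\Resultant(\f)|^{2}\,\|x\|^{d}\|y\|^{d}.
\]
This already accounts for the factor $|\Resultant(\f)|^{-2}$ in~\eqref{eqn:f:Lipschitz}, so it remains to show that the numerator is bounded by $\|x\|^{d-1}\|y\|^{d-1}\max_{k,\ell}|x_{k}y_{\ell}-x_{\ell}y_{k}|$.

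To bound the numerator, my plan is to establish a purely algebraic identity of the form
\[
  \F_i(x)\F_j(y)-\F_j(x)\F_i(y)
  \;=\; \sum_{0\le k<\ell\le N} G_{k\ell}^{(i,j)}(x,y)\,\bigl(x_{k}y_{\ell}-x_{\ell}y_{k}\bigr),
\]
where each $G_{k\ell}^{(i,j)}$ is bi-homogeneous of bi-degree $(d-1,d-1)$ in $(x,y)$, with coefficients that are integer polynomial expressions in the coefficients of $\F$. Granting this, the minimality of $\F$ forces the coefficients of $\F$ to be $K$-integral, hence so are the coefficients of every $G_{k\ell}^{(i,j)}$, and the ultrametric inequality gives $\bigl|G_{k\ell}^{(i,j)}(x,y)\bigr|\le\|x\|^{d-1}\|y\|^{d-1}$. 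One further application of the ultrametric inequality then produces
\[
  \max_{i,j}\bigl|\F_i(x)\F_j(y)-\F_j(x)\F_i(y)\bigr|
  \;\le\; \|x\|^{d-1}\|y\|^{d-1}\max_{k,\ell}\bigl|x_{k}y_{\ell}-x_{\ell}y_{k}\bigr|,
\]
and dividing by the lower bound on $\|\F(x)\|\cdot\|\F(y)\|$ delivers exactly~\eqref{eqn:f:Lipschitz}.

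The only genuinely nontrivial step, and the one I expect to be the main obstacle, is the algebraic identity. Expanding $\F_i\F_j$ monomially shows that $\F_i(x)\F_j(y)-\F_j(x)\F_i(y)$ is a $\ZZ$-linear combination of the brackets $x^{\alpha}y^{\beta}-x^{\beta}y^{\alpha}$ with $|\alpha|=|\beta|=d$, whose coefficients are of the form $a_{i,\alpha}a_{j,\beta}-a_{j,\alpha}a_{i,\beta}$. It therefore suffices to show that each such bracket lies in the ideal $I=\langle x_{k}y_{\ell}-x_{\ell}y_{k}\rangle\subset\ZZ[x_{0},\ldots,x_{N},y_{0},\ldots,y_{N}]$. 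I would prove this by induction on the minimum number of adjacent transpositions needed to carry the multiset of exponent-pairs for $x^{\alpha}y^{\beta}$ into that for $x^{\beta}y^{\alpha}$, peeling off one transposition at a time by the congruence $x_{k}y_{\ell}\equiv x_{\ell}y_{k}\pmod{I}$. Because the only relations invoked have integer coefficients, the resulting $G_{k\ell}^{(i,j)}$ automatically have integer-polynomial coefficients in the $\F$-coefficients, which is exactly the integrality property needed above.
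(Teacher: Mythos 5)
Your proposal is correct and follows essentially the same route as the paper: express $\F_i(x)\F_j(y)-\F_j(x)\F_i(y)$ in the ideal generated by the $2\times 2$ minors $x_ky_\ell-x_\ell y_k$ with cofactors whose coefficients are integral for a minimal lift, then divide by the lower bound $\|\F(x)\|\,\|\F(y)\|\ge|\Resultant(\f)|^2\|x\|^d\|y\|^d$ from Proposition~\ref{prop:ResFxdleFlexd}. The only cosmetic difference is that the paper normalizes $\|x\|=\|y\|=1$ up front rather than tracking the bi-degree $(d-1,d-1)$ homogeneity, and you spell out the telescoping argument for the ideal membership that the paper merely asserts.
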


\begin{remark}
More generally, any morphism $\f:\PP^N\to\PP^M$ is Lipschitz
continuous, although the Lipschitz constant depends in a more
complicated way on~$\f$.  
\end{remark}

\begin{remark}
\label{remark:good:reduction}
Recall that the map~$\f$ has good reduction if its minimal resultant is
a $K$-unit.  (See~\cite[Section~1.3]{KSEqualCanHt}.) Hence if~$\f$ has
good reduction, then~$\f$ is nonexpanding with respect to the
chordal metric, so the Julia set of~$\f$ 
(see Section~\ref{section:fatou:julia}) is empty. This generalizes
the well-known result for~$\PP^1$, see for example~\cite{MR1324210}.
\end{remark}

\begin{proof}[Proof of Theorem \ref{theorem:f:Lipschitz}]
Let~$\F=(\F_0:\cdots:\F_N)$ be a minimal lift of~$\f$.
Consider the homogeneous polynomials
\[
  \F_i(X)\F_j(Y)-\F_j(X)\F_i(Y)\in K[X,Y].
\]
They are in the ideal generated by 
\[
  \bigl\{ X_kY_l-X_lY_k : 0 \le k < l\le N \bigr\}.
\]
More precisely, there are polynomials~$A_{i,j,k,l}(X,Y)$ whose
coefficients are bilinear forms (with integer coefficients) in the
coefficients of~$\F_i$ and~$\F_j$ such that
\[
  \F_i(X)\F_j(Y)-\F_j(X)\F_i(Y)
  = \sum_{0\le k <l\le N} A_{i,j,k,l}(X,Y)(X_kY_l-X_lY_k).
\]
\par
Now let $P,Q\in\PP^N(K)$ and write~$P=\pi(x)$ and~$Q=\pi(y)$ as usual
with $\|x\|=1$ and $\|y\|=1$. Then
\begin{align}
  \label{eqn:FiFjFjFi}
  \bigl|\F_i(x)\F_j(y)&-\F_j(x)\F_i(y)\bigr| \notag\\
  &\le
  \max_{0\le k <l\le N} \bigl|A_{i,j,k,l}(x,y)|\cdot\bigl|x_ky_l-x_ly_k\bigr| 
    \notag\\
  &\le  \|\F_i\|\cdot\|\F_j\|
      \max_{0\le k <l\le N} |x_ky_l-x_ly_k| \notag\\
  &\le \dist(P,Q)
   \qquad\text{since $\|\F\|=1$ by assumption.}
\end{align}
\par
Since~$\F$ has $K$-integral coefficients and~$\|x\|=\|y\|=1$, 
Proposition~\ref{prop:ResFxdleFlexd} says that
\begin{equation}
  \label{eqn:FxgeRF}
  \|\F(x)\| \ge |\Resultant(\F)|
  \qquad\text{and}\qquad
  \|\F(y)\| \ge |\Resultant(\F)|.
\end{equation}
Using~\eqref{eqn:FiFjFjFi} and~\eqref{eqn:FxgeRF} in the definition
of the chordal distance yields
\begin{align*}
  \dist\bigl(\f(P),\f(Q)\bigr)
  &= \frac{\displaystyle\max_{0 \leq i, j \leq N} 
       \bigl|\F_i(x)\F_j(y)-\F_j(x)\F_i(y)\bigr|}
    {\Vert \F(x) \Vert \cdot \Vert \F(y)\Vert} \\
  &\le |\Resultant(\F)|^{-2}\dist(P,Q).
\end{align*}
This completes the proof of Theorem~\ref{theorem:f:Lipschitz}.
\end{proof}

The previous theorem considered the distance from~$\F(P)$ to~$\F(Q)$. 
We next study the variation of the ratio of~$\|\F(P)\|$ to~$\|\F(Q)\|$.
  
\begin{theorem}
\label{theorem:g:Lipschitz}
Let $\f: \PP^N \to \PP^N$ be a morphism of degree $d \geq 2$ defined
over~$K$, let~$\F:\AA^{N+1}_K\to\AA^{N+1}_K$ be a lift of~$\f$, and
define a function
\begin{align}
  \label{eqn:defgdFx}
  g_\F:\PP^N(K)\longrightarrow\RR,\qquad
  g_\F(P) = \smash[b]{\frac{1}{d}} \log\Vert \F(x)\Vert &- \log\Vert x\Vert \\
  &\text{for any~$x\in\pi^{-1}(P)$.} \notag
\end{align}
Then $g_\F$ is Lipschitz continuous with respect to the chordal
metric. 
\par
More precisely,  for all $P, Q \in \PP^N(K)$ we have
\begin{equation}
\label{eqn:g:Lipschitz}
  \bigl|g_\F(P) -  g_\F(Q)\bigr| 
     \leq \frac{\log(|\Resultant(\f)|^{-1})}{d|\Resultant(\f)|}\dist(P,Q).
\end{equation}
Further,
\begin{equation}
\label{eqn:g:locconst}
  g_\F(P)=g_\F(Q)\qquad
  \text{if\quad $\dist(P,Q)<|\Resultant(\f)|$.}
\end{equation}
In particular,~$g_\F$ is \emph{uniformly} locally constant.  \textup{(}Note that
the norm on the lefthand side of~\eqref{eqn:g:Lipschitz} is the usual
archimedean absolute value on $\RR$.\textup{)}
\end{theorem}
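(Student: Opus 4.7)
The plan is to pass from the chordal distance between $P$ and $Q$ to an honest affine distance between suitable lifts, and then to compare $\F(x)$ and $\F(y)$ via the multinomial expansion of the homogeneous polynomials~$\F_i$. First I would normalize lifts so that $\|x\| = \|y\| = 1$, in which case
$$g_\F(P) - g_\F(Q) = \frac{1}{d}\log\bigl(\|\F(x)\|/\|\F(y)\|\bigr),$$
and the problem reduces to comparing $\|\F(x)\|$ and $\|\F(y)\|$.

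The key preliminary step, and what I expect to be the main obstacle, is to show that when $\dist(P,Q) < 1$ there exists a $K$-unit $c \in K^*$ with $\|y - cx\| \le \dist(P,Q)$. I would pick an index $k_0$ with $|x_{k_0}| = 1$; Lemma~\ref{lemma:distance:points} then forces $|y_{k_0}| = 1$, so $c := y_{k_0}/x_{k_0}$ is a $K$-unit, and for each coordinate
$$|y_l - cx_l| = |x_{k_0} y_l - x_l y_{k_0}| \le \dist(P,Q).$$
Setting $\delta = y - cx$ and expanding $\F_i(cx + \delta)$ by the multinomial theorem, every monomial of degree $\ge 1$ in $\delta$ contributes a summand of absolute value at most $\|\delta\| \le \dist(P,Q)$, because the coefficients of $\F$, the multinomial coefficients (which are rational integers), the scalar $c$, and the entries of $x$ are all $K$-integral with $|c| = \|x\| = 1$. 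This yields
$$|\F_i(y) - c^d \F_i(x)| \le \dist(P,Q) \qquad \text{for every index } i.$$

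To deduce the locally constant statement \eqref{eqn:g:locconst}, I would apply the bound with~$i$ chosen so that $|\F_i(x)| = \|\F(x)\|$. Proposition~\ref{prop:ResFxdleFlexd} gives $|c^d \F_i(x)| = \|\F(x)\| \ge |\Res(\f)|$, so when $\dist(P,Q) < |\Res(\f)|$ the strong triangle inequality forces $|\F_i(y)| = \|\F(x)\|$, whence $\|\F(y)\| \ge \|\F(x)\|$; symmetry gives equality and hence $g_\F(P) = g_\F(Q)$.

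Finally, for the Lipschitz bound \eqref{eqn:g:Lipschitz} I may assume $\dist(P,Q) \ge |\Res(\f)|$, since the locally constant case already settles the rest. Proposition~\ref{prop:ResFxdleFlexd} places $\|\F(x)\|$ and $\|\F(y)\|$ in the interval $[|\Res(\f)|, 1]$, so the crude archimedean estimate
$$\bigl|\log\|\F(x)\| - \log\|\F(y)\|\bigr| \le \log\bigl(|\Res(\f)|^{-1}\bigr)$$
combines with $|\Res(\f)| \le \dist(P,Q)$ after dividing by~$d$ to produce the asserted Lipschitz constant $\log(|\Res(\f)|^{-1})/(d|\Res(\f)|)$. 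Once the geometric reduction $\|y - cx\| \le \dist(P,Q)$ is in place, everything that follows is a direct application of the strong triangle inequality.
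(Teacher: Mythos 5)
Your proposal is correct and is essentially the same argument as in the paper: both reduce to normalized lifts with $\|x\|=\|y\|=1$, locate an index $k_0$ via Lemma~\ref{lemma:distance:points} with $|x_{k_0}|=|y_{k_0}|=1$, use the resulting decomposition (your $y = cx + \delta$ with $c=y_{k_0}/x_{k_0}$ a $K$-unit is a rescaled version of the paper's $y_{k_0}x = x_{k_0}y + (y_{k_0}x - x_{k_0}y)$), and then finish with the ultrametric inequality in the close case and the crude bound from Proposition~\ref{prop:ResFxdleFlexd} in the far case. The one step you should make explicit is the initial reduction: since $g_{c\F}-g_\F$ is constant, the difference $g_\F(P)-g_\F(Q)$ is independent of the lift, so one may assume $\F$ is a minimal lift, which is what licenses your claim that the coefficients of $\F$ are $K$-integral.
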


\begin{proof}
Homogeneity of~$\F$ implies that~$g_\F(P)$ is well-defined, independent
of the lift of~$P$. Further, for any constant~$c$ we have 
\[
  g_{c\F}(P)=g_\F(P) + \frac{1}{d}\log|c|,
\]
so the difference~$g_\F(P)-g_\F(Q)$ is independent of the chosen lift
of~$\f$.  Hence without loss of generality, we assume that~$\F$ is a
minimal lift of~$\f$. To ease notation, we let
\[
  R = \bigl|\Resultant(\f)\bigr|
\]
be the absolute value of the minimal resultant. Note
that~\text{$0<R\le1$}.
\par
Let~$P=\pi(x)$ and~$Q=\pi(y)$ with~$\|x\|=\|y\|=1$ as usual,
so in particular Proposition~\ref{prop:ResFxdleFlexd}
tells us that
\begin{equation}
  \label{eqn:FxgeRF1}
  1 \ge \|\F(x)\| \ge R
  \qquad\text{and}\qquad
  1\ge \|\F(y)\| \ge R.
\end{equation}
\par
We consider two cases. The first case is for points~$P$ and~$Q$ that are not
close together. Suppose that~$\dist(P,Q)\ge R$. Then
using~\eqref{eqn:FxgeRF1} we find that
\[
  \bigl|g_\F(P)-g_\F(Q)\bigr|
  =\frac{1}{d}\left|\log\frac{\|\F(x)\|}{\|\F(y)\|} \right|
  \le \frac{1}{d}\log(R^{-1})
  \le \frac{\log(R^{-1})}{dR}\dist(P,Q).
\]
This proves that the function~$g_\F$ is Lipschitz for points~$P$ and~$Q$
satisfying~$\dist(P,Q)\ge R$.
\par
Next we consider the case that $\dist(P,Q)<R$. Notice the strict
inequality, so in particular~$\dist(P,Q)<1$. We
have~\text{$\|x\|=\|y\|=1$} by assumption, so from
Lemma~\ref{lemma:distance:points} we can find an index~$k$ such
that \text{$|x_k|=|y_k| =1$}.  
\par 
In order to complete the proof, we expand $\F(x+h)$ as
\[
  \F(x+h) = \F(x) + \sum_{i=0}^N h_i B_i(x,h),
\]
where each~$B_i$ is a vector of polynomials whose coefficients are
linear forms (with integer coefficients) in the coefficients
of~$\F$. Then using the particular index~$k$ determined above, we
compute
\begin{align}
  \label{eqn:Fx=Fxky}
  \|\F(x)\|
  &=\|y_k^d\F(x)\| \notag\\
  &=\|\F(y_kx)\| \notag\\
  &=\|\F(x_ky+y_kx-x_ky)\| \notag\\
  &=\Bigl\|\F(x_ky) + \sum_{i=0}^N (y_kx_i-x_ky_i) B_i(x_ky,y_kx-x_ky)\Bigr\|.
\end{align}
Now we observe that
\begin{align*}
  \Bigl\|\sum_{i=0}^N (y_kx_i-x_ky_i) B_i(x_ky,y_kx-x_ky)\Bigr\|
  & \le \max_i |y_kx_i-x_ky_i| \\
  & \le \dist(P,Q)
  < R,
\end{align*}
while in the other direction we have
\[
  \|\F(x_ky)\|=|x_k|^d\|\F(y)\|=\|\F(y)\|\ge R.
\]
Hence the first term in the righthand side of~\eqref{eqn:Fx=Fxky} has
absolute value strictly larger than the second term, so we deduce that
\[
  \|\F(x)\|=\|\F(x_ky)\|=|x_k|^d\|\F(y)\|=\|\F(y)\|.
\]
Hence
\[
  g_\F(P)-g_\F(Q) = \frac{1}{d}\log\frac{\|\F(x)\|}{\|\F(y)\|}=0.
\]
We have thus proven that if $\dist(P,Q)<R$, then~$g_\F(P)=g_\F(Q)$,
which completes the proof of Theorem~\ref{theorem:g:Lipschitz}.
\end{proof}

\section{Elementary properties of the Green function}
\label{section:properties:green}

In this section we recall from~\cite{KSEqualCanHt} the definition and
basic properties of nonarchimedean Green functions. Note that what we call 
{\em nonarchimedean Green functions} are called {\em 
homogeneous local canonical height functions} in \cite{MR2244226}, 
and the (Arakelov) Green functions in \cite{MR2244226} are functions 
on $\PP^1\times\PP^1$ with a logarithmic pole along the diagonal. 

\begin{theorem}
\label{thm:greenfuncprops}
Let $\f: \PP_K^N \to \PP_K^N$ be a morphism of degree $d \geq 2$ 
and let $\F: K^{N+1}\to K^{N+1}$ be a lift of~$\f$.  
\begin{parts}
\Part{(a)}
There is a unique function
\[
  G_\F: (K^{N+1})^* \longrightarrow \RR
\]
satisfying 
\begin{equation}
\label{eqn:properties:G}
  G_\F\bigl(\F(x)\bigr) = d G_\F(x) 
  \quad\text{and}\quad 
  G_\F(x) = \log \Vert x \Vert + O(1).
\end{equation}
The function~$G_\F$ is called the \emph{Green function of~$\F$}.
\Part{(b)}
The value of the Green function is given by the limit
\[
  G_\F(x) = \lim_{n\to\infty} \frac{1}{d^n} \log\Vert \F^n(x) \Vert.
\]
\Part{(c)}
The Green function satisfies
\[
  \hspace{1\parindent}
  G_\F(cx) = G_\F(x)+\log|c|
  \qquad\text{for all $c\in K^*$ and all~$x\in(K^{N+1})^*$.}
\]
\Part{(d)}
If we use a different lift~$c\F$ in place of~$\F$,
then the Green function changes by a constant amount,
\[
  G_{c\F}(x) = G_\F(x) +\frac{1}{d-1}\log|c|.
\]
\end{parts}
\end{theorem}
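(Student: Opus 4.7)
The plan is to carry out the standard Tate--style telescoping construction, which is the natural nonarchimedean analogue of the argument used over $\CC$. The key input will be Proposition~\ref{prop:ResFxdleFlexd}, which after replacing~$\F$ by a nonzero scalar multiple so that all coefficients are $K$-integral gives the two-sided estimate
\[
  |\Resultant(\F)|\cdot\|x\|^d \;\le\; \|\F(x)\| \;\le\; \|x\|^d,
\]
and hence, taking logarithms and dividing by~$d$, a bound of the form
\[
  \Bigl|\tfrac{1}{d}\log\|\F(x)\| - \log\|x\|\Bigr| \;\le\; \tfrac{1}{d}\log\bigl(|\Resultant(\F)|^{-1}\bigr).
\]

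First I would establish~(b), which simultaneously proves the existence half of~(a). Set $a_n(x)=d^{-n}\log\|\F^n(x)\|$. Applying the displayed bound with~$x$ replaced by~$\F^n(x)$ and dividing by~$d^n$ shows that the consecutive differences $|a_{n+1}(x)-a_n(x)|$ are bounded by $Cd^{-n-1}$, where $C=\log(|\Res(\F)|^{-1})$. Therefore $(a_n(x))_{n\ge0}$ is Cauchy in~$\RR$, uniformly in~$x$ in the sense that the tail estimate is independent of~$x$, so the limit $G_\F(x)=\lim a_n(x)$ exists. Passing to the limit in the identity $a_{n+1}(x)=d^{-1}a_n(\F(x))\cdot d/d=a_n(\F(x))/d$ yields $G_\F(\F(x))=dG_\F(x)$, and summing the geometric bound on the telescoping series gives $|G_\F(x)-\log\|x\||\le C/(d-1)$, proving the second assertion of~\eqref{eqn:properties:G}.

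Next I would prove the uniqueness half of~(a). Suppose $G'$ satisfies the same two conditions and set $h=G_\F-G'$. Then~$h$ is bounded on~$(K^{N+1})^*$ and satisfies $h(\F(x))=d\,h(x)$. Iterating gives $h(\F^n(x))=d^n h(x)$; since the left side is bounded and $d\ge2$, we conclude $h\equiv0$. For~(c), homogeneity of~$\F$ gives $\F^n(cx)=c^{d^n}\F^n(x)$, so $\|\F^n(cx)\|=|c|^{d^n}\|\F^n(x)\|$, and dividing by~$d^n$ and taking logs yields $G_\F(cx)=G_\F(x)+\log|c|$ in the limit. For~(d), induction shows $(c\F)^n=c^{(d^n-1)/(d-1)}\F^n$, so
\[
  \tfrac{1}{d^n}\log\|(c\F)^n(x)\|
  = \tfrac{d^n-1}{d^n(d-1)}\log|c| + \tfrac{1}{d^n}\log\|\F^n(x)\|,
\]
and letting $n\to\infty$ gives the stated relation.

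There is no serious obstacle: the only point requiring attention is the initial normalization of~$\F$ to a lift with $K$-integral coefficients so that Proposition~\ref{prop:ResFxdleFlexd} applies; once a concrete such lift is fixed, the behavior under rescaling in~(d) then recovers the statement for arbitrary lifts. The whole argument is essentially Tate's telescoping trick, with the Macaulay resultant bound playing the role that the explicit polynomial bound $\|\F(x)\|\asymp\|x\|^d$ plays in the complex setting.
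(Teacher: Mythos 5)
Your proof is correct; it is the standard Tate telescoping construction based on the two-sided estimate of Proposition~\ref{prop:ResFxdleFlexd}, with the normalization to a $K$-integral lift handled properly via the rescaling identity in~(d). The paper itself does not prove this theorem inline but cites \cite[Theorem~7 and Lemma~8]{KSEqualCanHt}, and your argument is the standard one that one would expect to find there.
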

\begin{proof}
See~\cite[Theorem 7]{KSEqualCanHt} for~(a,b,c)
and~\cite[Lemma~8]{KSEqualCanHt} for~(d).
\end{proof}

\begin{definition}
Let~$\f:\PP^N_K\to\PP^N_K$ be a morphism of degree~$d\ge2$,
let~$\F$ be a lift of~$\f$, and let~$G_\F$ be the associated Green
function. We define the (\emph{modified}) \emph{Green function of~$\f$}
to be the function
\begin{equation}
  \label{eqn:ghat}
  \begin{aligned}
     &\mG_\F:\PP^N(K)\longrightarrow\RR,\\
     &\mG_\F(P) = G_\F(x) - \log \Vert x\Vert
     \quad\text{for any $x\in\pi^{-1}(P)$.}
  \end{aligned}
\end{equation}
\end{definition}

We end this section by proving a few elementary properties
of the modified Green function.

\begin{proposition}
\label{prop:modgreenfuncprops}
Let $\f: \PP_K^N \to \PP_K^N$ be a morphism of degree $d \geq 2$,
let $\F: K^{N+1}\to K^{N+1}$ be a lift of~$\f$, and let~$\mG_\F$
be the modified Green function defined by~\eqref{eqn:ghat}.
\begin{parts}
\Part{(a)}
$\mG_\F(P)$ does not depend on the choice of the lift~$x\in
K^{N+1}$ of~$P$, so~$\mG_\F$ is a well-defined function on~$\PP^N(K)$.
\vspace{1\jot}
\Part{(b)}
Let $g_\F(P) = d^{-1}\log\bigl\|\F(x)\bigr\|-\log\|x\|$ be the
function defined by~\eqref{eqn:defgdFx} in the statement of
Theorem~\textup{\ref{theorem:g:Lipschitz}}. Then
\[
  \mG_\F\bigl(\f(P)\bigr)
  = d\mG_\F(P) - dg_\F(P).
\]
\Part{(c)}
The Green function~$\mG_\F$  is given by the series
\[
  \mG_\F(P) = \sum_{n=0}^\infty \frac{1}{d^n} g_\F\bigl(\f^n(P)\bigr).
\]
\Part{(d)}
Assume that~$\F$ is a minimal lift of~$\f$. Then
the Green function~$\mG_\F$ is nonpositive.
Further,~$\mG_\F(P)=0$ if and only if~$g_\F\bigl(\f^n(P)\bigr)=0$ for
all~$n\ge0$.  \textup(See Theorem~\textup{\ref{prop:G0orbitalFatou}}
for a characterization of the set where~$\mG_\F(P)=0$.\textup)
\end{parts}
\end{proposition}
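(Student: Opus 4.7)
The plan is to treat the four parts in the order given, letting each feed the next. For part (a), I would fix a lift $x$ of $P$ and an arbitrary scalar $c\in K^*$, then apply Theorem~\ref{thm:greenfuncprops}(c) to get $G_\F(cx)=G_\F(x)+\log|c|$ while $\log\|cx\|=\log\|x\|+\log|c|$; the $\log|c|$ terms cancel, so $G_\F(x)-\log\|x\|$ depends only on $P=\pi(x)$.

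For part (b), pick a lift $x$ of $P$, so $\F(x)$ is a lift of $\f(P)$. Then by definition of $\mG_\F$ and the functional equation $G_\F(\F(x))=dG_\F(x)$ from Theorem~\ref{thm:greenfuncprops}(a), I would compute
\[
\mG_\F\bigl(\f(P)\bigr)=G_\F(\F(x))-\log\|\F(x)\|=dG_\F(x)-\log\|\F(x)\|,
\]
and then add and subtract $d\log\|x\|$ to rewrite the right-hand side as $d\mG_\F(P)-dg_\F(P)$, where $g_\F(P)=d^{-1}\log\|\F(x)\|-\log\|x\|$ is exactly the function from Theorem~\ref{theorem:g:Lipschitz}.

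For part (c), I would iterate the identity of part (b) in the form $\mG_\F(P)=g_\F(P)+d^{-1}\mG_\F(\f(P))$ to obtain, for every $N\ge1$,
\[
\mG_\F(P)=\sum_{n=0}^{N-1}\frac{1}{d^n}g_\F\bigl(\f^n(P)\bigr)+\frac{1}{d^N}\mG_\F\bigl(\f^N(P)\bigr).
\]
The tail $d^{-N}\mG_\F(\f^N(P))$ tends to $0$ because $\mG_\F$ is bounded on $\PP^N(K)$: by Theorem~\ref{thm:greenfuncprops}(a), $G_\F(x)-\log\|x\|$ is an $O(1)$ function of $x\in(K^{N+1})^*$, and by part (a) this descends to a bounded function on $\PP^N(K)$. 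Letting $N\to\infty$ yields the claimed series.

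For part (d), assume $\F$ is a minimal lift, so its coefficients are $K$-integral. Then Proposition~\ref{prop:ResFxdleFlexd} gives $\|\F(x)\|\le\|x\|^d$ for every $x\in K^{N+1}$, which translates into $g_\F(P)\le 0$ for every $P\in\PP^N(K)$. Combining this with the series representation of part (c) shows that every summand is nonpositive, hence $\mG_\F(P)\le 0$, and since all terms share the same sign the sum vanishes if and only if each $g_\F(\f^n(P))=0$. I expect none of this to present a real obstacle; the only delicate point is the boundedness of $\mG_\F$ used to kill the tail in (c), but that is already built into Theorem~\ref{thm:greenfuncprops}(a) together with part (a) of the proposition.
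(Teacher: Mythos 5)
Your proof is correct, and parts (a), (b), and (d) match the paper's argument essentially verbatim. For part (c) you take a slightly different route: you iterate the functional equation from part (b) to get the partial sum plus a remainder $d^{-N}\mG_\F(\f^N(P))$, and then kill the remainder using the boundedness of $\mG_\F$ (which you correctly extract from the $O(1)$ in Theorem~\ref{thm:greenfuncprops}(a)). The paper instead telescopes the partial sum $\sum_{n=0}^{k}d^{-n}g_\F(\f^n(P))$ directly from the definition of $g_\F$, obtaining $d^{-k}\log\|\F^k(x)\|-\log\|x\|$, and then invokes the limit formula $G_\F(x)=\lim_k d^{-k}\log\|\F^k(x)\|$ from Theorem~\ref{thm:greenfuncprops}(b); this avoids any appeal to boundedness or to part (b). Both are standard telescoping arguments and equally valid; the paper's is marginally more self-contained since it goes straight from the raw definitions, while yours makes the structural role of the functional equation in (b) more visible.
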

\begin{proof}
(a) The homogeneity of the Green function
(Theorem~\ref{thm:greenfuncprops}(c))  implies that
\[
  G_\F(cx)-\log\|cx\| = G_\F(x)-\log(x)
  \qquad\text{for all $c\in K^*$.}
\]
\par\noindent(b)\enspace
The transformation property for~$G_\F$ 
(Theorem~\ref{thm:greenfuncprops}(a)) gives 
\begin{align*}
  \mG_\F\bigl(\f(P)\bigr) 
  &= G_\F\bigl(\F(x)\bigr) - \log\bigl\|\F(x)\bigr\| \\
  &= d G_\F(x) - \log\bigl\|\F(x)\bigr\| \\
  &= d \mG_\F(P) - \bigl(\log\bigl\|\F(x)\bigr\| - d\log\|x\| \bigr) \\
  &= d \mG_\F(P) - d g_\F(P).
\end{align*}
\par\noindent(c)\enspace
This follows from  the usual telescoping sum argument. Thus
\begin{align*}
  \sum_{n=0}^k \frac{1}{d^n}g_\F\bigl(\f^n(P)\bigr)
  &= \sum_{n=0}^k \frac{1}{d^n}
        \Bigl( \frac{1}{d}\log\bigl\|\F^{n+1}(x)\bigr\|
           - \log\bigl\|\F^{n}(x)\bigr\| \Bigr) \\
  &= \frac{1}{d^k}\log\bigl\|\F^{k}(x)\bigr\| - \log\|x\|.
\end{align*}
Letting~$k\to\infty$, the righthand side goes to~$\mG_\F(P)$.
\par\noindent(d)\enspace
The upper bound in Proposition~\ref{prop:ResFxdleFlexd} tells us that
the function~$g_\F$ satisfies
\[
  g_\F(P) = \frac{1}{d}\log\bigl\|\F(x)\bigr\|-\log\|x\|\le 0.
\]
Hence the sum in~(c) consists entirely of nonpositive terms.
It follows that~$\mG_\F(P)\le 0$, and further~$\mG_\F(P)=0$ if and only
if every term in the sum vanishes.
\end{proof}

\begin{remark} 
Chambert-Loir tells us that the modified Green function~$\mG_\F$ is 
related to the canonical [admissible] metric on 
the line bundle~$\Ocal_{\PP^N}(1)$ 
introduced by Zhang \cite{MR1311351}. 

Precisely, we write~$\Vert\cdot\Vert_{\sup}$ for 
the metric on~$\Ocal_{\PP^N}(1)$ defined by  
\[
  \Vert s\Vert_{\sup}(P) = \frac{|s(x)|}{\Vert x \Vert} 
  \quad\text{for~$s \in \Gamma(\PP^N, \Ocal_{\PP^N}(1))$ 
  and any~$x \in \pi^{-1}(P)$}, 
\]
and~$\Vert\cdot\Vert_{\F}$ for the canonical metric 
on~$\Ocal_{\PP^N}(1)$ associated to~$\f:\PP^N\to\PP^N$ and 
a lift $\F$ of $\f$ (see \cite[Theorem~(2.2)]{MR1311351}). Then we obtain
\[
  \mG_\F = \log\frac{\Vert\cdot\Vert_{\sup}}{\Vert\cdot\Vert_{\F}}. 
\]
Hence properties of~$\mG_\F$ give the 
corresponding properties of the canonical metric 
$\Vert\cdot\Vert_{\F}$. 
\end{remark}

\section{H{\"o}lder continuity of the Green function}
\label{section:Holder:continuity}

Our goal in this section is to prove that~$\mG_\F$ is H{\"o}lder
continuous on~$\PP^N$.  We follow the argument of
Dinh--Sibony~\cite[Proposition~2.4]{MR2137979} (See also 
Favre--Rivera-Letelier~\cite[Proposition~6.5]{MR2221116}).   
Over a nonarchimedean valuation field, we easily obtain explicit 
constants for H{\"o}lder continuity. 
We begin with an elementary lemma.  

\begin{lemma}
\label{lemma:minDakbk}
Let~$a,b,D$ be constants satisfying $a >1$, $b >1$ and $0<D\le1$. Then
\[
  \min \{ Da^k + b^{-k} : k \in \ZZ,\, k>0 \}
 \leq 2 a D^{\frac{\log b}{\log a b}}.
\]
\end{lemma}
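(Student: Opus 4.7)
The plan is to choose the integer $k$ that approximately balances the two terms $Da^k$ and $b^{-k}$. Setting them equal gives $D(ab)^k = 1$, i.e., $k \approx \log(1/D)/\log(ab)$, which is nonnegative since $D\le 1$ and $ab > 1$. So I would define
\[
  k_0 = \max\left\{1,\ \Bigl\lceil \frac{\log(1/D)}{\log(ab)}\Bigr\rceil\right\}
\]
and estimate each term separately.

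For the term $b^{-k_0}$: since $k_0 \ge \log(1/D)/\log(ab)$, we have $b^{k_0} \ge b^{\log(1/D)/\log(ab)} = (1/D)^{\log b/\log(ab)}$, which gives
\[
  b^{-k_0} \le D^{\frac{\log b}{\log(ab)}}.
\]
For the term $Da^{k_0}$: since $k_0 \le 1 + \log(1/D)/\log(ab)$, we get
\[
  Da^{k_0} \le Da \cdot a^{\log(1/D)/\log(ab)} = a \cdot D \cdot D^{-\log a/\log(ab)} = a \cdot D^{1 - \log a/\log(ab)} = a\,D^{\frac{\log b}{\log(ab)}},
\]
using $1 - \log a/\log(ab) = \log b/\log(ab)$. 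Adding the two estimates,
\[
  Da^{k_0} + b^{-k_0} \le (a+1)\, D^{\frac{\log b}{\log(ab)}} \le 2a\, D^{\frac{\log b}{\log(ab)}},
\]
since $a > 1$.

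The one step requiring a little care is checking that the upper bound $k_0 \le 1 + \log(1/D)/\log(ab)$ is valid even when we take the $\max$ with $1$. If the ceiling is already $\ge 1$ there is nothing to check; if the ceiling is $0$ (which happens exactly when $D = 1$), then $k_0 = 1$ and the inequality $Da^{k_0} = a \le a \cdot D^{\log b/\log(ab)} = a$ holds with equality, while $b^{-1} \le 1 \le a$, so the final bound $a+1 \le 2a$ is still valid. Beyond this small bookkeeping, the proof is a direct computation, so I do not anticipate any real obstacle.
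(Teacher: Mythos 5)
Your proof is correct and uses essentially the same idea as the paper: pick an integer $k$ near the balancing point $t=\log(1/D)/\log(ab)$ (the paper takes $k=\lfloor t\rfloor+1$, you take $\max\{1,\lceil t\rceil\}$), then bound $Da^k$ and $b^{-k}$ by comparison with $Da^t=b^{-t}=D^{\log b/\log(ab)}$. Your term-by-term estimate even yields the marginally sharper constant $a+1$ before you relax it to $2a$, but the substance of the argument is the same.
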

\begin{proof}
Let~$t\in\RR$ be the number
\[
  t = \frac{\log (D^{-1})}{\log ab}.
\]
Then the assumptions on~$a,b,D$ imply that~$t\ge0$, and by definition
of~$t$ we have $Da^{t} = b^{-t}$. Hence
\[
  D a^{t} + b^{-t}
   = 2 D a^{\frac{\log(D^{-1})}{\log ab}}  
   = 2 D\cdot D^{-\frac{\log a}{\log ab}}  
   = 2 D^{\frac{\log b}{\log ab}}.
\]
We put $k = \lfloor{t}\rfloor + 1$. Then $k$ is a positive integer and
we have
\begin{align*}
  D a^{k} + b^{-k}
   = a^{k -t} D a^{t} + b^{-(k -t)} b^{-t} 
  & \leq a D a^{t} + b^{-t} \\
  & \leq a \left( D a^{t} + b^{-t}\right)
   = 2a D^{\frac{\log b}{\log ab}}.
\end{align*}
This completes the proof of the lemma.
\end{proof}

We now prove that the nonarchimedean Green function is H{\"o}lder
continuous and give explicit H{\"o}lder constants.

\begin{theorem}
\label{thm:ghatholder}
The modified Green function $\mG_\F: \PP^N(K)\to\RR$ defined
by~\eqref{eqn:ghat} is H{\"o}lder continuous. More precisely, let
\[
  u = u(\f) = \max\bigl\{2d,|\Resultant(\f)|^{-2}\bigr\}.
\]
Then
\begin{equation}
  \label{eqn:Holder}
  \bigl| \mG_\F(P) - \mG_\F(Q) \bigr| 
  \le \frac{2u\log u}{d}
    \dist(P,Q)^{\frac{\log d}{\log u}}
  \quad\text{for all~$P,Q\in\PP^N(K)$.}
\end{equation}
\end{theorem}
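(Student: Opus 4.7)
My approach is to exploit the series representation
\[
\mG_\F(P) - \mG_\F(Q) = \sum_{n=0}^\infty \frac{1}{d^n}\bigl(g_\F(\f^n(P)) - g_\F(\f^n(Q))\bigr)
\]
from Proposition~\ref{prop:modgreenfuncprops}(c), and then optimize a truncation point using the preceding lemma. I will estimate each summand in two complementary ways. First, the Lipschitz estimate on $g_\F$ from Theorem~\ref{theorem:g:Lipschitz} combined with the iterated bound $\dist(\f^n(P), \f^n(Q)) \le R^{-2n}\dist(P,Q) \le u^n\,\dist(P,Q)$, obtained by iterating Theorem~\ref{theorem:f:Lipschitz} (where $R = |\Resultant(\f)|$), controls each term by something growing like $u^n\,\dist(P,Q)$. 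Second, Proposition~\ref{prop:modgreenfuncprops}(d) combined with the lower bound in Proposition~\ref{prop:ResFxdleFlexd} yields the uniform estimate $|g_\F| \le \log R^{-1}/d$, hence $|g_\F(\f^n P) - g_\F(\f^n Q)| \le 2\log R^{-1}/d$.

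Fix an integer $k \ge 1$ (to be chosen), and split the series at $n = k$. Applying the Lipschitz bound for $n < k$ and the uniform bound for $n \ge k$ produces
\[
|\mG_\F(P) - \mG_\F(Q)| \le \frac{\log R^{-1}}{dR}\,\dist(P,Q) \sum_{n=0}^{k-1}(u/d)^n \;+\; \frac{2\log R^{-1}}{d}\sum_{n=k}^\infty d^{-n}.
\]
Since $u \ge 2d$, the ratio $u/d \ge 2$, so the first (geometric) sum is controlled by a modest explicit constant multiple of $(u/d)^k$, while the second tail is controlled by a constant multiple of $d^{-k}$. The resulting estimate has the shape $A\,\dist(P,Q)\cdot(u/d)^k + B\cdot d^{-k}$ with explicit $A,B$ depending on $R$.

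I then invoke the preceding lemma with $a = u/d > 1$, $b = d > 1$, and $D$ proportional to $\dist(P,Q)$. Since $ab = u$, the lemma's conclusion produces a bound of the form $(\text{coefficient})\cdot\dist(P,Q)^{\log b / \log ab} = (\text{coefficient})\cdot\dist(P,Q)^{\log d / \log u}$, which is the claimed H\"older exponent. The main technical obstacle will be massaging the explicit coefficient down to the stated value $2u\log u/d$. Key simplifications are $\log R^{-1} \le \tfrac{1}{2}\log u$ and $R^{-1} \le u^{1/2}$ (both consequences of $R^{-2} \le u$), together with $d - 1 \ge d/2$ (from $d \ge 2$) and $u/d - 1 \ge u/(2d)$ (from $u/d \ge 2$), to tidy up the geometric-sum denominators. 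Finally, one must handle separately the regime in which $\dist(P,Q)$ is too large for the hypothesis $D \le 1$ of the lemma to apply; in that case, applying the uniform bound directly to the entire series already yields a bound of the order $\log u/d$, which a direct comparison with $\dist(P,Q)^{\log d / \log u} \ge u^{-\tfrac{1}{2}\log d/\log u} = d^{-1/2}$ shows is dominated by the target coefficient $2u\log u/d$, completing the proof.
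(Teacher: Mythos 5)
Your argument follows the same strategy as the paper's proof: the series representation of $\mG_\F$ from Proposition~\ref{prop:modgreenfuncprops}(c), the Lipschitz bound on $g_\F$ from Theorem~\ref{theorem:g:Lipschitz} combined with iterated Theorem~\ref{theorem:f:Lipschitz}, the uniform bound on $g_\F$ coming from Proposition~\ref{prop:ResFxdleFlexd}, a split of the series at a free index $k$, and Lemma~\ref{lemma:minDakbk} to optimize over $k$. The one place you diverge is in how the lemma is instantiated: the paper takes $D=\dist(P,Q)$ directly (so $D\le 1$ is automatic) and handles the regimes $dR^2\le\tfrac12$ and $dR^2\ge\tfrac12$ separately, with $a=(dR^2)^{-1}$ in the first and $a=2$ in the second, so that $ab$ equals $u=\max\{2d,R^{-2}\}$ in each case and the constant $\tfrac{2u\log u}{d}$ drops out cleanly; you instead replace $R^{-2}$ by $u$ uniformly before applying the lemma (thereby avoiding the $dR^2$ case split) at the cost of a scaling factor inside $D$ that can push $D$ above $1$, which forces the extra regime you flag at the end, handled by comparing against the trivial uniform bound $\lvert\mG_\F(P)-\mG_\F(Q)\rvert\le 2\log R^{-1}/(d-1)$. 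Both routes land on the stated constant; yours is slightly more streamlined up front but needs a bit more care verifying the lemma's hypothesis $D\le 1$, and you should also check that the factor $\bigl(d/(2\sqrt{u})\bigr)^{\log d/\log u}$ does not spoil the constant when $d/(2\sqrt{u})>1$ (it does not, since the exponent is less than $1$ and one ends with $\sqrt{u}\le u$).
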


\begin{proof}
In general, the Green function~$G_\F$ and the modified Green
function~$\mG_\F$ depend on the chosen lift~$\F$ of~$\f$. However,
Theorem~\ref{thm:greenfuncprops}(d) tells us that~$G_{c\F}-G_\F$ is
constant, so the difference~$\mG_\F(P)-\mG_\F(Q)$ is independent of the
chosen lift~$\F$ of~$\f$. Hence without loss of generality we may
assume that~$\F$ is a minimal lift of~$\f$.
\par
To ease notation, we let $R=|\Resultant(\f)|$ as usual. We also recall
the function
\[
  g_\F(P) = \frac{1}{d}\log\|\F(x)\|-\log\|x\|
\]
used in Theorem~\ref{theorem:g:Lipschitz}. Note that
Proposition~\ref{prop:ResFxdleFlexd} tells us that~$g_\F$ is a bounded
function,
\begin{equation}
  \label{eqn:1dRlegPle0}
  \frac{\log(R)}{d} \le g(P) \le 0
  \qquad\text{for all $P\in\PP^N(K)$.}
\end{equation}
Further, Proposition~\ref{prop:modgreenfuncprops}(c)
says that we can write~$\mG_\F$ as a telescoping sum,
\[
  \mG_\F(P) = \sum_{n=0}^{\infty} \frac{1}{d^n} g_\F\bigl(\f^n(P)\bigr).
\]
Let $k$ be an auxiliary integer to be chosen later.
We compute
{\allowdisplaybreaks
\begin{align}
  \label{eqn:whgbd}
  \bigl| \mG_\F(&P) - \mG_\F(Q) \bigr| \notag\\
  &= \Bigl|\sum_{n=0}^\infty \frac{1}{d^n}\bigr(g_\F(\f^n(P))
               -g_\F(\f^n(Q))\bigr)\Bigr| \notag\\
  & \leq \sum_{n=0}^{k-1} \frac{1}{d^n}   \bigl|g_\F(\f^n(P))
        - g_\F(\f^n(Q))\bigr| 
      + 2\Bigl(\sum_{n=k}^{\infty} \frac{1}{d^n}\Bigr)
              \sup_{T\in\PP^N(K)} \bigl|g_\F(T)\bigr|\notag\\
  & \leq \sum_{n=0}^{k-1} \frac{1}{d^n} \cdot \frac{\log(R^{-1})}{dR}
           \dist\bigl(\f^n(P),\f^n(Q)\bigr)
      + \frac{2}{d^k}\cdot\frac{1}{1-d^{-1}}\cdot \frac{\log(R^{-1})}{d} \notag\\
  &\omit\hfil from Theorem~\ref{theorem:g:Lipschitz} 
                 and \eqref{eqn:1dRlegPle0}, \notag\\
  & \leq \sum_{n=0}^{k-1} \frac{1}{d^n} \cdot \frac{\log(R^{-1})}{dR} 
        \cdot R^{-2n}\dist(P,Q)
      + \frac{2\log(R^{-1})}{d-1}\cdot\frac{1}{d^k} \notag\\
  &\omit\hfil from Theorem~\ref{theorem:f:Lipschitz}, \notag\\
  & \leq 2\log(R^{-1})\cdot 
     \left( \frac{\dist(P,Q)}{2}\cdot
     \sum_{n=1}^{k} \frac{1}{(dR^2)^n} + \frac{1}{d^k}
       \right). 
\end{align}
}
\par
The most interesting case is when $dR^2$ is small, say $dR^2\le\frac12$,
so we consider that case first. Then the bound~\eqref{eqn:whgbd} yields
\[
  \bigl| \mG_\F(P) - \mG_\F(Q) \bigr| 
  \le 2\log(R^{-1})\cdot
     \left( \dist(P,Q)\left(\frac{1}{dR^2}\right)^{k} + d^{-k} \right).
\]
We now choose~$k$ as described in Lemma~\ref{lemma:minDakbk}. This gives
the desired upper bound
\begin{equation}
  \label{eqn:Holder1}
  \bigl| \mG_\F(P) - \mG_\F(Q) \bigr| 
  \le \frac{4\log(R^{-1})}{dR^2} \cdot
    \dist(P,Q)^{\frac{\log d}{\log(R^{-2})}}
\end{equation}
\par
Next we suppose that~$dR^2\ge\frac12$. Then $\sum_{n=1}^k (dR^2)^{-n}<2^{k+1}$,
so~\eqref{eqn:whgbd} gives
\[
  \bigl| \mG_\F(P) - \mG_\F(Q) \bigr| 
  \le \log2d \cdot
     \left( \dist(P,Q)2^{k} + d^{-k} \right).
\]
Now another application of Lemma~\ref{lemma:minDakbk} yields the upper bound
\begin{equation}
  \label{eqn:Holder2}
  \bigl| \mG_\F(P) - \mG_\F(Q) \bigr| 
  \le 4\log2d \cdot
    \dist(P,Q)^{\frac{\log d}{\log2d}}.
\end{equation}
Combining~\eqref{eqn:Holder1} and~\eqref{eqn:Holder2} completes the
proof that $\mG_\F$ is H{\"o}lder continuous with the explicit
constants listed in~\eqref{eqn:Holder}.
\end{proof}


\section{Distance functions and the open mapping property}
\label{section:inversefncthm}
In this section we recall a distribution relation for distance
functions proven in~\cite{MR919501}, where it was used to
prove a quantitative nonarchimedean inverse function theorem. We apply
the distribution relation to give a short proof that finite morphisms
$\f:\PP^N\to\PP^N$ over nonarchimedean fields are open maps, i.e.,
they map open sets to open sets. More generally, the same is true for
any finite morphism of projective varieties.

\begin{proposition}[Distribution Relation]
\label{prop:distributionrelation}
Let~$\f:\PP^N\to\PP^N$ be a  morphism of degree~$d\ge1$ defined
over~$K$ and let~$P,T\in\PP^N(K)$. Then
\[
  -\log\dist\bigl(\f(P),T\bigr)
  = \sum_{Q\in\f^{-1}(T)} -e_\f(Q) \log\dist(P,Q) + O_\f(1),
\]
where~$e_\f(Q)$ is the ramification index of~$\f$ at~$Q$ and the
big-$O$ constant depends on~$\f$, but is independent of~$P$ and~$Q$.
\par
In particular, there is a constant $c=c(\f)\ge1$ such that
for all~$P,T\in\PP^N(K)$ we have
\begin{equation}
  \label{eqn:minQfR}
  \min_{Q\in\f^{-1}(T)} \dist(P,Q) \le c\dist\bigl(\f(P),T\bigr)^{1/d}.
\end{equation}
\end{proposition}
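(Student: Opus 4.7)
The plan is to cite the distribution relation as a direct consequence of the Weil-function machinery developed in \cite{MR919501}, and then to derive \eqref{eqn:minQfR} from it by an elementary max-versus-sum estimate.

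The first identity is an instance of the general functoriality of Weil functions (or ``arithmetic distance functions''). Namely, $-\log\dist(\cdot, T)$ is a Weil function for the $0$-cycle $[T]$ on $\PP^N$, and for a finite morphism $\f$ the scheme-theoretic pullback $\f^*[T]$ equals $\sum_{Q\in\f^{-1}(T)} e_\f(Q)[Q]$; two Weil functions for the same cycle differ by an $O_\f(1)$ function. Applying this to the Weil function for $[T]$ composed with $\f$ versus the weighted sum of the Weil functions for the individual $[Q]$'s produces the asserted identity with an error term independent of $P$ and $T$. I would simply cite \cite{MR919501} for this, after a short sanity check that the chordal distance of Section~\ref{section:chordal:metric} agrees (up to a bounded function) with the Weil function normalized there.

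For \eqref{eqn:minQfR} the argument is essentially one line. Every summand $-e_\f(Q)\log\dist(P, Q)$ is nonnegative, since Lemma~\ref{lemma:spherdist} gives $\dist(P, Q) \le 1$ and $e_\f(Q) \ge 1$. Writing $M = \max_{Q \in \f^{-1}(T)} \bigl( -\log\dist(P, Q) \bigr)$, the distribution relation bounds
\[
  -\log\dist\bigl(\f(P), T\bigr)
  \le \Bigl( \sum_{Q\in\f^{-1}(T)} e_\f(Q) \Bigr) M + O_\f(1).
\]
Since $\sum_Q e_\f(Q)$ is a fixed integer depending only on $\f$, this rearranges to $M \ge -\alpha\log\dist(\f(P), T) - O_\f(1)$ for an explicit $\alpha > 0$; exponentiating produces a bound of the form $\min_Q \dist(P, Q) \le c\,\dist\bigl(\f(P), T\bigr)^\alpha$ with some $c = c(\f) \ge 1$, and translating $\alpha$ into the exponent $1/d$ appearing in \eqref{eqn:minQfR} is a matter of tracking the topological degree of $\f$.

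The main obstacle is purely one of convention: one must verify that the Weil functions in \cite{MR919501} are normalized compatibly with our chordal distance, so that the $O(1)$ term in the cited distribution relation is genuinely a bounded constant and not a function of $P$ or $T$, and one must match up the numerical exponent obtained from $\sum_Q e_\f(Q)$ with the $1/d$ printed in \eqref{eqn:minQfR}. Neither step is substantive once the Weil-function statement is in hand.
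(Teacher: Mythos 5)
Your approach is the same as the paper's: cite \cite[Proposition~6.2(b)]{MR919501} for the distribution relation, then derive \eqref{eqn:minQfR} by bounding each nonnegative summand by the maximum and exponentiating. The paper's version of the second step is even terser than yours — it simply says ``exponentiating the first statement and using the fact that $\sum_{Q\in\f^{-1}(T)}e_\f(Q)=d$.'' Your max-vs-sum computation spells out exactly how this works, so that part is fine and in fact more transparent than what the paper records.

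The one place you should tighten is the final numerical step. You write that ``translating $\alpha$ into the exponent $1/d$ \ldots\ is a matter of tracking the topological degree of $\f$,'' but you leave this unchecked, and it deserves a second look. The exponent your computation yields is $1/D$ with $D=\sum_{Q\in\f^{-1}(T)}e_\f(Q)$, the length of a fiber of the finite flat morphism $\f$. The paper asserts $D=d$. However, the topological degree of a morphism $\f:\PP^N\to\PP^N$ given by forms of algebraic degree~$d$ is $d^N$ (since $\f^*\Ocal(1)=\Ocal(d)$, so $\f^*H^N=d^NH^N$), which agrees with $d$ only when $N=1$. So ``tracking the topological degree'' as you propose would not give $1/d$ for $N>1$; either the $e_\f(Q)$ of \cite{MR919501} is normalized so that $\sum_Q e_\f(Q)=d$ rather than the scheme-theoretic fiber length, or the printed exponent should be $1/d^N$. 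You should resolve this rather than deferring it, though note that the only downstream use, Corollary~\ref{cor:fisopen}, needs merely some positive exponent and is unaffected by which value is correct. One further small point: the paper also records why the boundary correction term $\lambda_{\partial W\times V}$ from \cite{MR919501} drops out (because $\PP^N$ is projective and $\f$ is a morphism on all of $\PP^N$); your ``sanity check that the chordal distance agrees with the normalization in \cite{MR919501}'' should absorb this remark.
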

\begin{proof}
The first statement is a special case of
\cite[Proposition~6.2(b)]{MR919501}. Note that since~$\PP^N$ is
projective and~$\f$ is defined on all of~$\PP^N$, we do not need
the~$\lambda_{\partial W\times V}$ term that appears
in~\cite{MR919501}. The second statement is immediate from
exponentiating the first statement and using the fact
that~$\sum_{Q\in\f^{-1}(T)}e_\f(Q)=d$.
\end{proof}

\begin{remark}
For refined calculations, there is a version
of~\eqref{eqn:minQfR} without the~$1/d$ exponent provided that~$P$ is
not in the ramification locus of~$\f$. More precisely,
\cite[Theorem~6.1]{MR919501} implies that if~$\f$ is
unramified at~\text{$P\in\PP^N(K)$}, then there is a disk~$D_r(P)$
around~$P$ such that the map
\[
  \f : D_r(P) \longrightarrow \f\bigl(D_r(P)\bigr)
\]
is bijective and biLipschitz, i.e., both~$\f$ and~$\f^{-1}$ are
Lipschitz. Of course, we have already seen that~$\f$ is Lipschitz
(Theorem~\ref{theorem:f:Lipschitz}), the new information is
that~$\f^{-1}$ is also Lipschitz.  Notice that even if~$\f$ is
ramified at~$P$, Proposition~\ref{prop:distributionrelation}
more-or-less says that~$\f^{-1}$ (which doesn't quite exist) satisfies
$\dist\bigl(\f^{-1}(P),\f^{-1}(Q)\bigr)\ll\dist(P,Q)^{1/d}$,
so~$\f^{-1}$ is locally H\"older continuous.
\end{remark}

\begin{corollary}
\label{cor:fisopen}
Let~$\f:\PP^N\to\PP^N$ be a  morphism of degree~$d\ge1$ defined over~$K$.
Then~$\f$ is an open mapping, i.e.,~$\f$ maps open sets to open sets.
\end{corollary}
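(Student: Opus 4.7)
The plan is to derive openness directly from the distribution relation in Proposition~\ref{prop:distributionrelation}, which provides a quantitative ``local surjectivity'' statement: every target point near $\f(P)$ has some preimage near $P$, with an explicit modulus.

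First I would fix an open set $U \subseteq \PP^N(K)$ and a point $T \in \f(U)$, and choose a preimage $P \in U$ with $\f(P) = T$. To show that $T$ is interior to $\f(U)$, I would use that $U$ is open in the chordal metric to find some radius $r > 0$ with $D_r(P) \subseteq U$. The goal is then to produce a radius $\epsilon > 0$ such that $D_\epsilon(T) \subseteq \f(U)$.

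For any target point $T' \in D_\epsilon(T)$, apply the inequality~\eqref{eqn:minQfR} of Proposition~\ref{prop:distributionrelation} with the roles arranged as in the statement, namely taking the source point to be $P$ and the ``center'' of the preimage fiber to be $T'$. This gives a point $Q \in \f^{-1}(T')$ with
\[
  \dist(P, Q) \le c(\f)\, \dist\bigl(\f(P), T'\bigr)^{1/d}
  = c(\f)\, \dist(T, T')^{1/d} < c(\f)\, \epsilon^{1/d}.
\]
Choosing $\epsilon = (r/c(\f))^d / 2$ (or any value with $c(\f)\epsilon^{1/d} < r$) ensures that $Q \in D_r(P) \subseteq U$, and therefore $T' = \f(Q) \in \f(U)$. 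Since $T'$ was arbitrary in $D_\epsilon(T)$, we conclude $D_\epsilon(T) \subseteq \f(U)$, which is exactly the statement that $T$ is an interior point.

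There is no real obstacle here: the H\"older-type preimage estimate~\eqref{eqn:minQfR} does all the work, and the fact that the exponent $1/d$ is less than $1$ is irrelevant for the purpose of openness (we only need some nontrivial modulus of continuity for a preimage selection). The only minor subtlety worth flagging is that $\f^{-1}(T')$ is nonempty because $\f$ is a finite surjective morphism of $\PP^N$, so the minimum in~\eqref{eqn:minQfR} is taken over a nonempty finite set and is actually attained.
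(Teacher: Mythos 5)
Your proof is correct and is essentially the same as the paper's: both invoke the quantitative preimage estimate~\eqref{eqn:minQfR} from Proposition~\ref{prop:distributionrelation} to pull any $T'$ near $\f(P)$ back to a preimage $Q$ near $P$, with the radius $\epsilon$ chosen so that $c\,\epsilon^{1/d}$ is at most the radius of a disk inside $U$. The only cosmetic difference is your extra factor of $1/2$ in the choice of $\epsilon$ (the paper relies on the strictness of the open-disk inequality instead), and your remark that $\f^{-1}(T')$ is nonempty is a reasonable point to flag even though the paper takes it for granted.
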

\begin{proof}
Let~$U\subset\PP^N(K)$ be an open set and let~$\f(P)\in \f(U)$ be a
point in the image of~$\f$. We need to find a disk around~$\f(P)$
that is contained in~$\f(U)$. Since~$U$ is open, we can find an~$\e>0$
so that~$D_\e(P)\subset U$. Let~$\d=(\e/c)^d$, where~$c$ is the constant
appearing in~\eqref{eqn:minQfR} in
Proposition~\ref{prop:distributionrelation}. We claim
that~$D_\d\bigl(\f(P)\bigr)\subset\f(U)$, which will complete the proof.  
\par
So let~$T\in D_\d\bigl(\f(P)\bigr)$. We apply the second statement in
Proposition~\ref{prop:distributionrelation} to find a point~$Q\in\f^{-1}(T)$
satisfying
\[
  \dist(P,Q) 
  \le c\dist\bigl(\f(P),T)^{1/d}
  < c \d^{1/d}
  = \e.
\]
Hence~$Q\in D_\e(P)\subset U$, so $T=\f(Q)\in\f(U)$.
\end{proof}

We note that the H\"older-type inequality~\eqref{eqn:minQfR} that
follows from the distribution relation
(Proposition~\ref{prop:distributionrelation}) can be used to prove
directly from the definition that the Fatou and Julia sets of~$\f$
are completely invariant. However, since we have not yet defined the
Fatou and Julia sets, we defer the proof until
Section~\ref{section:behavior}, where we instead give a short proof
based on our characterization of the Fatou set as the set 
on which the Green function is locally constant.



\section{Nonarchimedean analysis}
\label{section:preliminaries}
Let $K$ be an algebraically closed field that is complete with respect
to a nonarchimedean absolute value as usual.  In this section, we
recall some basic facts from nonarchimedean analysis. For details we
refer the reader to \cite{MR0746961}.

Let $a = (a_1, \ldots, a_N) \in K^N$ and let $r\in|K^*|$ be a real
number in the value group of~$K$.  A formal power series
\[
  \Psi(x) = \sum_{i_1, \ldots, i_N \geq 0} 
  c_{i_1\ldots i_N} (x_1-a_1)^{i_1}\cdots (x_N-a_N)^{i_N}
\] 
is said to be {\em analytic on $\bar{B}(a, r)$} if the coefficients
$c_{i_1\ldots i_N} \in K$ satisfy
\[
  \lim_{i_1 + \cdots + i_N \to \infty}
  |c_{i_1\ldots i_N}| r^{i_1 + \cdots + i_N} =  0.
\]
Then~$\Psi(x)$ defines a function~$\Psi:\bar{B}(a,r)\to K$.
The \emph{Gauss norm of~$\Psi$} on~$\bar{B}(a,r)$ is the quantity
\[
  \Vert \Psi \Vert_{\bar{B}(a, r)} 
  = \sup_{i_1\ldots i_N} 
  \{ |c_{i_1\ldots i_N}| r^{i_1+ \cdots +  i_N}\}.
\]
If~$\Psi$ is analytic on~$\bar{B}(a,r)$, then~$\Vert \Psi \Vert_{\bar{B}(a, r)}$
is finite, and the strong triangle inequality gives
\[
  |\Psi(x)| \leq \Vert \Psi \Vert_{\bar{B}(a, r)}
  \quad\text{for all $x \in \bar{B}(a,r)$.}
\]

\begin{lemma} 
\label{lemma:maximal:principle:etc}
Let $\Psi$ be an analytic function on $\bar{B}(a, r)$. 
\begin{parts}
\Part{(a)} \textup{[Maximum Principle]}
There is an $x^{\prime}
\in \bar{B}(a, r)$ such that 
\[
  |\Psi(x^{\prime})| = \Vert \Psi \Vert_{\bar{B}(a, r)}. 
\]
\Part{(b)}
For all $x,  y \in  \bar{B}(a, r)$, we have  
\[
  \bigl|\Psi(x) - \Psi(y)\bigr| \leq 
  \frac{\Vert \Psi \Vert_{\bar{B}(a, r)}}{r} \Vert x-y \Vert. 
\]
\end{parts}
\end{lemma}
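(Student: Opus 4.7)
The plan is to reduce both parts to the case $a=\bfzero$ and $r=1$ by a standard translation and rescaling, then handle (a) via reduction to the residue field and handle (b) by a telescoping factorization of $\Psi(x)-\Psi(y)$.

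For the reduction, note that replacing $x$ by $x-a$ reduces to $a=\bfzero$ without changing any norms. Next, since $r\in|K^*|$ we may pick $\alpha\in K^*$ with $|\alpha|=r$; the change of variables $x_i = \alpha y_i$ transforms $\bar{B}(\bfzero,r)$ isometrically (up to rescaling by~$r$) to $\bar{B}(\bfzero,1)$ and sends $\Psi(x)$ to the analytic function $\tilde\Psi(y) = \sum c_{i_1\ldots i_N}\alpha^{i_1+\cdots+i_N} y_1^{i_1}\cdots y_N^{i_N}$ on $\bar{B}(\bfzero,1)$. The Gauss norm of $\tilde\Psi$ on $\bar{B}(\bfzero,1)$ is exactly $\Vert\Psi\Vert_{\bar{B}(\bfzero,r)}$, and $\|x-y\|=r\|y-y'\|$ under the substitution, so both claimed inequalities transform correctly. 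Hence it suffices to treat the case $a=\bfzero$, $r=1$.

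For part (a), after dividing $\Psi$ by a coefficient of maximal absolute value we may further assume $\Vert\Psi\Vert=1$. Let $\Ocal_K=\{c\in K:|c|\le1\}$ be the valuation ring, $\gp$ its maximal ideal, and $\tilde K=\Ocal_K/\gp$ the residue field. Since the coefficients $c_{i_1\ldots i_N}$ tend to zero, only finitely many lie outside~$\gp$, so reducing coefficients modulo~$\gp$ yields a nonzero polynomial $\tilde\Psi\in\tilde K[y_1,\ldots,y_N]$. Because $K$ is algebraically closed, so is $\tilde K$; in particular $\tilde K$ is infinite, so there exists $\tilde y'\in\tilde K^N$ with $\tilde\Psi(\tilde y')\ne0$. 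Lifting $\tilde y'$ to $y'\in\bar{B}(\bfzero,1)$, the identity $\widetilde{\Psi(y')}=\tilde\Psi(\tilde y')\ne0$ in $\tilde K$ forces $|\Psi(y')|=1=\Vert\Psi\Vert$, proving the maximum is attained.

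For part (b), the key is the factoring identity
\[
y_1^{i_1}\cdots y_N^{i_N} - x_1^{i_1}\cdots x_N^{i_N}
=\sum_{k=1}^N (y_k-x_k)\,M_{k,I}(x,y),
\]
where $M_{k,I}(x,y) = \bigl(\sum_{l=0}^{i_k-1}y_k^l x_k^{i_k-1-l}\bigr) \prod_{j<k}y_j^{i_j}\prod_{j>k}x_j^{i_j}$ is a monomial sum of total degree $i_1+\cdots+i_N-1$ in $2N$ variables, each of whose terms has coefficient $1$. Multiplying by $c_I$ and summing over multi-indices, I would obtain an expansion
\[
\Psi(y)-\Psi(x)=\sum_{k=1}^N (y_k-x_k)\,Q_k(x,y),
\]
where each $Q_k$ is an analytic function on $\bar{B}(\bfzero,1)\times\bar{B}(\bfzero,1)$. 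The Gauss norm of $Q_k$ on this bidisk is bounded by $\Vert\Psi\Vert$, because each monomial $M_{k,I}$ contributes terms with coefficient of absolute value $|c_I|$ and total degree one less than $|I|$, keeping the sup of $|c_I|\cdot 1^{|I|-1}$ at most $\Vert\Psi\Vert$. Applying the bound $|Q_k(x,y)|\le\Vert Q_k\Vert\le\Vert\Psi\Vert$ and the strong triangle inequality yields
\[
|\Psi(x)-\Psi(y)|\le\max_{1\le k\le N}|x_k-y_k|\cdot\Vert\Psi\Vert
=\Vert\Psi\Vert\cdot\Vert x-y\Vert,
\]
which is exactly the claim for $r=1$; reinstating $r$ via the rescaling introduces the factor $1/r$ as stated. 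The only subtle point is carefully checking that the convergence of coefficients in $\Psi$ forces convergence of coefficients in each $Q_k$, so that the $Q_k$ really are analytic on the bidisk; this is a routine verification from $|c_I|\to 0$, but is the one step that needs to be done with some care rather than a one-line appeal.
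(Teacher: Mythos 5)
Your proof is correct and takes essentially the same route as the paper. For part (a), the paper simply cites \cite[\S~5.1.4, Propositions~3~and~4]{MR0746961} for the unit polydisk and then rescales; you instead unwind that citation and give the standard proof (normalize so $\Vert\Psi\Vert=1$, reduce coefficients modulo the maximal ideal to get a nonzero polynomial over the residue field $\tilde K$, use that $\tilde K$ is algebraically closed hence infinite to find a nonvanishing point, and lift), which is exactly the argument the cited reference makes. For part (b), both you and the paper rely on the same telescoping factorization $X^I - Y^I = \sum_j (X_j - Y_j)F_{I,j}(X,Y)$ with $F_{I,j}\in\ZZ[X,Y]$; the paper applies it term-by-term under $\sup_I$ after rescaling by $b$ with $|b|=r$, whereas you sum over $I$ to form the functions $Q_k$ and then bound them. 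The two packagings are equivalent, though the paper's term-by-term version sidesteps the convergence check you flag at the end: one never needs the $Q_k$ to be analytic, only the pointwise bound $|Q_k(x,y)| \le \sup_I |c_I|$, which follows immediately from $|M_{k,I}(x,y)|\le 1$ and the strong triangle inequality, so that last worry is a non-issue.
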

\begin{proof}
We fix a~$b\in{K}^*$ with~$|b|=r$.
\par\noindent
(a)\enspace
For a proof when $\bar{B}(a, r)$ is the unit polydisk, i.e., $a=0$
and $r=1$, see \cite[\S~5.1.4, Propositions~3~and~4]{MR0746961}. As
in \cite[Proposition~1.1]{MR1794277}, the general case follows 
using the isomorphism
\[
  \bar{B}(a, r) \longrightarrow \bar{B}(0, 1), 
  \qquad
  x \longmapsto \frac{x-a}{b}.
\]
\par\noindent
(b)\enspace
To ease notation, we let~$I=(i_1,\ldots,i_N)$ and write~$(x-a)^I$ for
the product~$\prod_{j=1}^N (x_j-a_j)^{i_j}$.  Similarly for~$(y-a)^I$
and~$r^I=r^{i_1+\cdots+i_N}$.  
Then
\begin{align*}
  \bigl|\Psi(x) - \Psi(y)\bigr| 
  &= \biggl| \sum_I c_I \bigl( (x-a)^I - (y-a)^I \bigr) \biggr|\\
  &\le \sup_I |c_I|\cdot \bigl| (x-a)^I - (y-a)^I \bigr| \\
  &\le \bigl(\sup_I |c_I|r^I \bigr) \cdot 
       \sup_I\left| \left(\frac{x-a}{b}\right)^I - \left(\frac{y-a}{b}\right)^I
       \right| \\
  &= \|\Psi\|_{\bar{B}(a,r)} \cdot
       \sup_I\left| \left(\frac{x-a}{b}\right)^I - \left(\frac{y-a}{b}\right)^I
       \right|.
\end{align*}
We now use the fact that for all~$I$ and~$j$ there exist
polynomials $F_{I,j}(X,Y)\in\ZZ[X,Y]$ such that
\[
  X^I - Y^I 
  := \biggl(\prod_{j=1}^N X_j^{i_j}\biggr) - \biggl(\prod_{j=1}^N Y_j^{i_j}\biggr)
  = \sum_{j=1}^N (X_j-Y_j)F_{I,j}(X,Y).
\]
Putting~$X=(x-a)/b$ and~$Y=(y-a)/b$ and using the triangle inequality
yields
\[
  \left| \left(\frac{x-a}{b}\right)^I - \left(\frac{y-a}{b}\right)^I \right|
  \le \max_{1\le j\le N} \left|\frac{x_j-y_j}{b}\right|
       \cdot \left|F_{I,j}\left( \frac{x-a}{b}, \frac{y-a}{b} \right)\right|.
\]
We know that~$|b|=r$ and~$x,y\in\bar{B}(a,r)$, and also~$F_{I,j}$ has
integer coefficients, so $\bigl|F_{I,j}((x-a)/b,(y-a)/b)\bigr|\le 1$.
Hence
\[
  \bigl|\Psi(x) - \Psi(y)\bigr| 
  \le \|\Psi\|_{\bar{B}(a,r)} \cdot
    \max_{1\le j\le N} \left|\frac{x_j-y_j}{b}\right|
  = \|\Psi\|_{\bar{B}(a,r)} \cdot \frac{\|x-y\|}{r}.
\]
\end{proof}

\begin{lemma}
\label{lemma:upper:bound}
Let $\Family$ be a family of analytic functions on $\bar{B}(a, r)$.
Assume that there is a constant $C >0$ such that
\[
  \bigl|\Psi(x)\bigr| \leq C 
  \qquad\text{for all $x \in \bar{B}(a, r)$ and all $\Psi \in \Family$.}
\]
Then for all $x, y \in \bar{B}(a, r)$ and all $\Psi, \Lambda \in \Family$ 
we have 
\[
  \bigl|\Psi(x)\Lambda(y) - \Psi(y)\Lambda(x)\bigr| 
     \leq \frac{C^2}{r} \Vert x- y \Vert.
\]
\end{lemma}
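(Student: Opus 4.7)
The plan is to reduce this to the one-function estimate in Lemma \ref{lemma:maximal:principle:etc}(b) by means of the standard telescoping identity
\[
  \Psi(x)\Lambda(y) - \Psi(y)\Lambda(x)
  = \Psi(x)\bigl(\Lambda(y)-\Lambda(x)\bigr) + \Lambda(x)\bigl(\Psi(x)-\Psi(y)\bigr).
\]
Applying the strong triangle inequality to this identity gives
\[
  \bigl|\Psi(x)\Lambda(y)-\Psi(y)\Lambda(x)\bigr|
  \le \max\Bigl\{\bigl|\Psi(x)\bigr|\cdot\bigl|\Lambda(y)-\Lambda(x)\bigr|,\
                 \bigl|\Lambda(x)\bigr|\cdot\bigl|\Psi(x)-\Psi(y)\bigr|\Bigr\}.
\]

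The remaining ingredient is to upgrade the pointwise bound $|\Psi|\le C$ to a bound on the Gauss norm so that Lemma \ref{lemma:maximal:principle:etc}(b) can be applied. This is exactly where the maximum principle, Lemma \ref{lemma:maximal:principle:etc}(a), is used: since $\|\Psi\|_{\bar{B}(a,r)}$ is attained at some point of $\bar{B}(a,r)$ and every such value is bounded by $C$, we obtain $\|\Psi\|_{\bar{B}(a,r)}\le C$, and likewise for $\Lambda$. Lemma \ref{lemma:maximal:principle:etc}(b) then yields
\[
  \bigl|\Psi(x)-\Psi(y)\bigr|\le \frac{C}{r}\,\Vert x-y\Vert
  \quad\text{and}\quad
  \bigl|\Lambda(y)-\Lambda(x)\bigr|\le \frac{C}{r}\,\Vert x-y\Vert.
\]
Combining these with the hypothesis $|\Psi(x)|\le C$ and $|\Lambda(x)|\le C$ in the displayed inequality above gives the desired estimate $C^2\Vert x-y\Vert/r$.

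There is no real obstacle here; the only subtle point is recognizing that the hypothesis is a pointwise sup bound while Lemma \ref{lemma:maximal:principle:etc}(b) is phrased in terms of the Gauss norm, and that the maximum principle bridges this gap. Everything else is a two-line telescoping computation and the nonarchimedean triangle inequality.
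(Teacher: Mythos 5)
Your proof is correct and follows essentially the same route as the paper's: both use the maximum principle to upgrade the pointwise bound to a Gauss-norm bound, then apply Lemma~\ref{lemma:maximal:principle:etc}(b) together with a telescoping decomposition and the strong triangle inequality. The only cosmetic difference is that you center the telescoping at $\Psi(x)\Lambda(x)$ while the paper centers it at $\Psi(y)\Lambda(y)$; this changes nothing of substance.
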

\begin{proof}
By Lemma~\ref{lemma:maximal:principle:etc}(a), we have 
$\Vert \Psi \Vert_{\bar{B}(a, r)} \leq C$ for all $\Psi \in \Family$.
Then for any $x, y \in \bar{B}(a, r)$ we have 
{\allowdisplaybreaks
\begin{align}
 \bigl|\Psi(x)\Lambda(y) &- \Psi(y)\Lambda(x)\bigr| \notag\\
 &  = \bigl|\Lambda(y)(\Psi(x) - \Psi(y)) 
       -  \Psi(y)(\Lambda(x) - \Lambda(y))\bigr| \notag\\
 &  \leq \max\bigl\{|\Lambda(y)|\cdot|\Psi(x) - \Psi(y)|,\;  
  |\Psi(y)|\cdot |\Lambda(x) - \Lambda(y)|\bigr\} \notag\\
 &  \leq \max\bigl\{\Vert \Lambda \Vert_{\bar{B}(a, r)} |\Psi(x) - \Psi(y)|, \; 
   \Vert \Psi \Vert_{\bar{B}(a, r)} |\Lambda(x) - \Lambda(y)|\bigr\} \notag\\
 &  \leq \frac{\Vert \Psi \Vert_{\bar{B}(a, r)} \cdot 
 \Vert \Lambda \Vert_{\bar{B}(a, r)} }{r} \Vert x- y \Vert
 \qquad \text{from Lemma~\ref{lemma:maximal:principle:etc}(b)} \notag\\
 &  \leq \frac{C^2}{r} \Vert x- y \Vert.
  \tag*{\qedsymbol}
\end{align}
}
\renewcommand{\qedsymbol}{}
\end{proof}

\section{The Fatou and Julia sets}
\label{section:fatou:julia}

In this section we recall the definition of the Fatou and Julia sets
for a family of maps on a metric space. In our case, the metric space
is~$\PP^N(K)$ with the metric induced by the chordal distance
function~$\dist$.

\begin{definition}
Let~$U$ be an open subset of~$\PP^N(K)$.  A family of maps~$\Family$
from~$U$ to~$\PP^N(K)$ is {\em equicontinuous} at a point~$P\in U$ if for
every~$\e>0$ there is a~$\d>0$ such that
\[
  \psi\bigl(D_\d(P)\bigr)  \subset D_\e\bigl(\psi(P)\bigr)
  \qquad\text{for all $\psi\in\Family$.}
\]
We note that the open disks~$D_\d(P)$ and~$D_\e\bigl(\psi(P)\bigr)$
may be replaced by closed disks~$\bar{D}_\d(P)$
and~$\bar{D}_\e\bigl(\psi(P)\bigr)$ without affecting the definition.
The family~$\Family$ is {\em equicontinuous on~$U$} if it is 
equicontinuous at every~$P \in U$. 
\par
We note that in general, equicontinuity at a point $P$  
is not an open condition, 
since~$\d$ may depend on both~$\e$ and~$P$.  In particular, it is
weaker than the related property of~$\Family$ being \emph{uniformly
continuous on~$U$}, in which a single~$\d$ is required to work for
every~$P\in U$.
\par
The family~$\Family$ is called (\emph{locally}) \emph{uniformly
Lipschitz at~$P\in U$} if there exists a
constant~$C=C(\Family,P)$ and a radius~$r=r(\Family,P)$ such that
\[
  \D\bigl(\psi(Q), \psi(R)\bigr) \leq C \D(Q, R)
  \qquad\text{for all $Q,R\in \bar{D}_r(P)$ and all $\psi\in\Family$.}
\]
In other words,~$\Family$ is locally uniformly Lipschitz at~$P$ if each
map in~$\Family$ is Lipschitz in some neighborhood of~$P$ and further
there is a single Lipschitz constant that works for
every~$\psi\in\Family$. 
\end{definition}

If the family~$\Family$ is equicontinuous on each open subsets~$U_\a$ 
of~$\PP(K)$, then it is equicontinuous on the union~$\bigcup_{\a} U_\a$. 
Taking collections~$\{U_\a\}$ to be all open subsets
of~$\PP^N(K)$ on which~$\Family$ is equicontinuous, we are led to 
the following definition. 
\par
For convenience, we say that a map~$\f:\PP^N\to\PP^N$ is
equicontinuous if the family of iterates~$\{\f^n\}_{n\ge1}$ is
equicontinuous, and similarly~$\f$ is locally uniformly Lipschitz
if its iterates are.

\begin{definition}
Let~$\f:\PP^N_K\to\PP^N_K$ be a morphism.  The {\em Fatou set of
$\f$}, denoted~$\Fatou(\f)$, is the union of all open subsets
of~$\PP^N(K)$ on which~$\f$ is equicontinuous.  Equivalently, the Fatou
set~$\Fatou(\f)$ is the largest open set such that the
family~$\{\f^n\}_{n=1}^{\infty}$ is equicontinuous at every point
of~$\Fatou(\f)$.  
The {\em Julia set of~$\f$}, denoted~$\Julia(\f)$,
is the complement of $\Fatou(\f)$.  Thus by definition the Fatou set
is open and the Julia set is closed.
\end{definition}

\section{The Green function on the Fatou and Julia sets}
\label{section:behavior}

In this section we characterize the Fatou set of~$\f$ as
the set on which the (modified) Green function~$\mG_\F$ is 
locally constant. Along the way, we prove that~$\f$ is locally
uniformly Lipschitz on the Fatou set.


\begin{theorem}
\label{thm:locally:const:and:F}
Let~$\f:\PP_K^N\to\PP_K^N$ be a morphism of degree~$d\ge2$ as usual,
let~$\F$ be a lift of~$\f$,
let~$\mG_\F$ be the \textup(modified\textup) Green function defined
by~\eqref{eqn:def:G} and~\eqref{eqn:modGfun}, and let~$P\in\PP^N(K)$.  
Then the following are equivalent:
\begin{parts}
\Part{(a)}
The iterates of~$\f$ are equicontinuous at every point in some
neighborhood of~$P$, i.e., $P\in\Fatou(\f)$.
\Part{(b)}
The iterates of~$\f$ are locally uniformly Lipschitz at~$P$.
\Part{(c)}
The function~$\mG_\F$ is constant on a neighborhood of~$P$.
\end{parts}
\end{theorem}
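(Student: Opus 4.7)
The plan is to establish the cyclic chain (b) $\Rightarrow$ (a) $\Rightarrow$ (c) $\Rightarrow$ (b); the first two implications are comparatively soft, while the real work lies in (c) $\Rightarrow$ (b). For (b) $\Rightarrow$ (a), a uniform Lipschitz bound $C$ on some disk around $P$ immediately yields equicontinuity at every point of that disk via the choice $\d = \e/C$, so $P \in \Fatou(\f)$.

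For (a) $\Rightarrow$ (c), the key input is the \emph{uniformly} locally constant statement \eqref{eqn:g:locconst} of Theorem \ref{theorem:g:Lipschitz}, which asserts $g_\F(Q) = g_\F(R)$ whenever $\dist(Q,R) < |\Resultant(\f)|$. I would apply equicontinuity of $\{\f^n\}$ at $P$ with the specific choice $\e = |\Resultant(\f)|$ to produce some $\d > 0$ with $\dist(\f^n(R), \f^n(P)) < |\Resultant(\f)|$ for every $R \in D_\d(P)$ and $n \ge 0$. Then $g_\F(\f^n(R)) = g_\F(\f^n(P))$ for all $n$, and summing the telescoping series in Proposition \ref{prop:modgreenfuncprops}(c) forces $\mG_\F(R) = \mG_\F(P)$ throughout $D_\d(P)$.

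The implication (c) $\Rightarrow$ (b) is the heart of the argument. First rescale $\F$ to a minimal lift, which only shifts $\mG_\F$ by a constant by Theorem \ref{thm:greenfuncprops}(d), and set $M_0 = \sup |\mG_\F|$; this is finite by the $O(1)$ bound in Theorem \ref{thm:greenfuncprops}(a), and Proposition \ref{prop:modgreenfuncprops}(d) gives $-M_0 \le \mG_\F \le 0$. Suppose $\mG_\F \equiv c$ on $\bar{D}_r(P)$ with $r<1$. After reordering coordinates so that $P = (1:a)$ with $\|a\| \le 1$, Lemma \ref{lemma:comparison:disks} identifies $\bar{D}_r(P)$ isometrically with $\bar{B}(a,r) \subset K^N$, and $\f^n$ is represented by the polynomials $\Psi_i^{(n)}(y) = \F_i^n(1,y)$, which have $K$-integral coefficients. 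The crucial identity
\[
\log \| \F^n(1,y) \| = d^n c - \mG_\F\bigl(\f^n(1:y)\bigr),
\]
obtained from $G_\F \circ \F = d\,G_\F$ and $G_\F(1,y) = c$, sandwiches $\|\F^n(1,y)\|$ between $e^{d^n c}$ and $e^{d^n c + M_0}$ uniformly in $y \in \bar{B}(a,r)$ and $n$. The Maximum Principle (Lemma \ref{lemma:maximal:principle:etc}(a)) then bounds each Gauss norm $\|\Psi_i^{(n)}\|_{\bar{B}(a,r)}$ by $e^{d^n c + M_0}$, so Lemma \ref{lemma:upper:bound} yields
\[
\bigl|\Psi_i^{(n)}(y)\Psi_j^{(n)}(y') - \Psi_j^{(n)}(y)\Psi_i^{(n)}(y')\bigr|
\le \frac{e^{2d^n c + 2M_0}}{r}\|y - y'\|,
\]
and dividing by $\|\F^n(1,y)\|\cdot\|\F^n(1,y')\| \ge e^{2 d^n c}$ in the definition of the chordal distance produces the uniform Lipschitz bound $\dist(\f^n(Q),\f^n(Q')) \le (e^{2M_0}/r)\,\dist(Q, Q')$ for all $Q,Q' \in \bar{D}_r(P)$ and all $n \ge 0$.

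The main obstacle is that when $c < 0$ the orbits of $\bar{D}_r(P)$ lift to vectors whose norms shrink like $e^{d^n c}$, so both the numerator and the denominator in the chordal distance carry exponentially small $n$-dependent factors; the argument goes through only because these factors cancel exactly. This exact cancellation requires the \emph{two-sided} bound $-M_0 \le \mG_\F \le 0$, not just the one-sided bound coming from minimality, and it is what makes the analytic approach via the Maximum Principle and Lemma \ref{lemma:upper:bound} succeed even when the constant value $c$ of $\mG_\F$ on the disk is strictly negative.
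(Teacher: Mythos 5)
Your proof is correct, and its structure---the cyclic chain $(b)\Rightarrow(a)\Rightarrow(c)\Rightarrow(b)$, and for the hard implication $(c)\Rightarrow(b)$, the reduction to a minimal lift, the identification of $\bar D_r(P)$ with a polydisk via Lemma~\ref{lemma:comparison:disks}, the Gauss-norm bounds via the maximum principle, and the final appeal to Lemma~\ref{lemma:upper:bound}---coincides with the paper's. The one genuine technical departure is how the Gauss-norm bounds are set up before invoking Lemma~\ref{lemma:upper:bound}: the paper first chooses $h_n\in K^*$ with $\log|h_n|$ close to $\mG_\F(P)$ (to circumvent the fact that $\mG_\F(P)$ need not lie in $\log|K^*|$) and renormalizes $\F^n$ to $\L_{\F,n}=h_n^{-d^n}\F^n$ so as to obtain $n$-\emph{independent} two-sided bounds $C_2^{-1}\le\|\L_{\F,n}(y)\|\le C_2$, whereas you retain the $n$-dependent bounds $e^{d^nc}\le\|\F^n(1,y)\|\le e^{d^nc+M_0}$ and let the factor $e^{2d^nc}$ cancel exactly upon dividing in the chordal-distance formula. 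Your version dispenses with the density-of-$\log|K^*|$ approximation entirely, is a bit shorter, and in fact produces a sharper Lipschitz constant: $e^{2M_0}/r$ with $M_0\le\frac{1}{d-1}\log|\Resultant(\f)|^{-1}=C_1$, versus the paper's $C_2^4/\d$ with $C_2=e^{C_1+1}$. This is a worthwhile streamlining of essentially the same argument.
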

\begin{proof}
It is clear that being locally uniformly Lipschitz at~$P$ is stronger
than being equicontinuous in a neighborhood of~$P$, so~(b)
implies~(a).
\par
Next we show that~(a) implies~(c), so we let~$P\in\Fatou(\f)$ and
let~$g_\F$ be the usual function
\[
  g_\F(Q) = \frac{1}{d}\log \Vert \F(y) \Vert - \log \Vert y \Vert
  \qquad\text{for $y\in\pi^{-1}(Q)$}
\]
as in Theorem~\ref{theorem:g:Lipschitz}. We take $\e = \frac12|\Res(\f)|$
in the definition of equicontinuity and find a~$\d=\d(\e,P)>0$ so that
\begin{multline*}
  \D(P, Q) \leq\d \quad\Longrightarrow\quad
  \D(\f^n(P), \f^n(Q)) \le \e < |\Res(\f)| \\
   \quad\text{for all $Q$ and all $n \geq 0$.}
\end{multline*}
It follows from Theorem~\ref{theorem:g:Lipschitz} that 
\[
  \D(P, Q) \leq\d \halfquad\Longrightarrow\halfquad
  g_\F(\f^n(P)) = g_\F(\f^n(Q))
  \quad\text{for all $Q$ and all $n \geq 0$.}
\]
Then the series representation of~$\mG_\F$
given in Proposition~\ref{prop:modgreenfuncprops}(c) implies that
\[
  \mG_\F(P) = \mG_\F(Q)
  \quad\text{for all  $Q\in\bar{D}_\d(P)$.}
\]
Hence~$\mG_\F$ is constant on~$\bar{D}_\d(P)$, which completes the
proof that~(a) implies~(c).
\par
It remains to show that~(c) implies~(b). So we assume that~$\mG_\F$ is
constant on~$\bar{D}_\d(P)$ and need to prove that the iterates
of~$\f$ are Lipschitz on~$\bar{D}_\d(P)$ with a uniform Lipschitz
constant.  We choose a minimal lift
\text{$\F:(K^{N+1})^*\to(K^{N+1})^*$} of~$\f$ and define
functions~$g_{\F,n}$ by
\[
  g_{\F,n}(Q)=\frac{1}{d^n}\log\bigl\|\F^n(y)\bigr\| - \log\|y\|
  \quad\text{for $Q\in\PP^N(K)$ and $y\in\pi^{-1}(Q)$.}
\]
Then as in the proof of Proposition~\ref{prop:modgreenfuncprops}, we can
use a telescoping sum to write
\begin{align*}
  \mG_\F(Q) - g_{\F,n}(Q)
   &= \lim_{k\to\infty} \frac{1}{d^k}\log\bigl\|\F^k(y)\bigr\|
      -  \frac{1}{d^n}\log\bigl\|\F^n(y)\bigr\| \\
   &= \sum_{k=n}^{\infty} \left( \frac{1}{d^{k+1}} \log\Vert \F^{k+1}(y)\Vert
       -\frac{1}{d^k} \log\Vert \F^k(y)\Vert\right).
\end{align*}
Then Proposition~\ref{prop:ResFxdleFlexd} gives the estimate
\begin{align}
  \label{eqn:mGQfnQ}
  \bigl|\mG_\F(Q) - g_{\F,n}(Q)\bigr|
  &\le \sum_{k\ge n} \frac{1}{d^k}\left| \frac{1}{d} \log\Vert \F^{k+1}(y)\Vert
       - \log\Vert \F^k(y)\Vert\right| \notag\\
  &\le \sum_{k\ge n} \frac{1}{d^{k+1}}  \log\bigl|\Resultant(\f)\bigr|^{-1}
  = \frac{C_1}{d^n},
\end{align}
where for convenience we
let~$C_1=\frac{1}{d-1}\log\bigl|\Resultant(\f)\bigr|^{-1}$. (In
particular, the constant~$C_1$ only depends on~$\f$.)
\par
Recall that we have fixed a point~$P\in\PP^N(K)$. It would be
convenient if we would find an element~$h\in K^*$
satisfying~$\log|h|=\mG_\F(P)$, but even if~$K=\CC_p$, we only
have~\text{$|\CC_p^*|=p^\QQ$}. However,~$\log(p^\QQ)$ is dense
in~$\RR$, so we can find a sequence of elements~$h_n\in K^*$
satisfying
\begin{equation}
  \label{eqn:mGPhn}
  \bigl| \mG_\F(P) - \log|h_n| \bigr| \le \frac{1}{d^n}
  \qquad\text{for all $n\ge 0$.}
\end{equation}
\par
Now let~$Q\in\bar{D}_\d(P)$ and choose lifts~$x\in\pi^{-1}(P)$
and~$y\in\pi^{-1}(Q)$. Note that~$\mG_\F(Q)=\mG_\F(P)$, since by
assumption~$\mG_\F$ is constant on~$\bar{D}_\d(P)$. This allows us to
estimate
\begin{align*}
  \biggl|\frac{1}{d^n} \log\bigl\|h_n^{-d^n}\F^n(y)\| &- \log\|y\| \biggr| \\
  &= \bigl|g_{\F,n}(Q) - \log|h_n| \bigr| \\
  &= \bigl|g_{\F,n}(Q) - \mG_\F(Q) + \mG_\F(P) - \log|h_n| \bigr| \\
  &\le \bigl|g_{\F,n}(Q) - \mG_\F(Q)\bigr| 
         + \bigl| \mG_\F(P) -\log|h_n| \bigr| \\
  &\le \frac{C_1+1}{d^n}
    \qquad\text{from \eqref{eqn:mGQfnQ} and \eqref{eqn:mGPhn}.}
\end{align*}
Hence if we define a new sequence of functions~$(\L_{\F,n})_{n\ge0}$ by
the formula
\[
  \L_{\F,n}(y) = h_n^{-d^n}\F^n(y)
\]
and a new constant $C_2=e^{C_1+1}$, then these new functions
satisfy
\begin{equation}
  \label{eqn:LnyC2}
  C_2^{-1} \le \frac{\bigl\|\L_{\F,n}(y)\bigr\|}{\|y\|^{d^n}} \le C_2
  \quad\text{for all $\pi(y)\in\bar{D}_\d(P)$ and $n\ge0$.}
\end{equation}
Notice that~$\L_{\F,n}$ is also a lift of~$\f^n$, since we have merely
multiplied~$\F^n$ by a constant.
\par
Reordering the coordinates if necessary and dividing by the largest
one, we may assume without loss of generality that~$x\in\pi^{-1}(P)$
satisfies~$x_0=1=\|x\|$.  Thus if we let~$a=(x_1,\ldots,x_N)$, then
Lemma~\ref{lemma:comparison:disks}(b) says that there is an isometric
isomorphism
\begin{equation}
  \label{eqn:isomisomsBadDdP}
  \s : \bar{B}(a,\d) \longrightarrow \bar{D}_\d(P),
  \qquad
  \s(b_1,\ldots,b_N)=(1:b_1:\dots:b_N).
\end{equation}
Let
\[
  \Psi_{\F,n}(b_1,\ldots,b_N) = \L_{\F,n}(1,b_1,\ldots,b_N)
\]
be the dehomogenization of~$\L_{\F,n}$. Then~\eqref{eqn:LnyC2} gives
\[
  C_2^{-1} \le \bigl\| \Psi_{\F,n}(b) \bigr\| \le C_2
  \qquad\text{for all $b\in\bar{B}(a,\d)$ and $n\ge0$.}
\]
\par
Write the coordinate functions of~$\Psi_{\F,n}$ as
$\Psi_{\F,n}=(\Psi_{n0},\ldots,\Psi_{nN})$ and consider the family of
functions
\[
  \bigl\{ \Psi_{ni} : 0\le i\le N~\text{and}~n\ge0 \bigr\}.
\]
Every function in this family satisfies $\bigl|\Psi_{ni}(b)\bigr|\le C_2$
for all $b\in\bar{B}(a,\d)$, so Lemma~\ref{lemma:upper:bound} tells 
us that
\begin{multline*}
  \bigl|\Psi_{ni}(b)\Psi_{nj}(b') - \Psi_{ni}(b')\Psi_{nj}(b) \bigr|
  \le \frac{C_2^2}{\d}\|b-b'\| \\
  \text{for all $b,b'\in\bar{B}(a,\d)$, all $0\le i,j\le N$, and all $n\ge0$.}
\end{multline*}
Combining this with the lower bound~$\|\Psi_{\F,n}(b)\|\ge C_2^{-1}$ 
yields
\begin{multline*}
  \frac{\bigl|\Psi_{ni}(b)\Psi_{nj}(b') - \Psi_{ni}(b')\Psi_{nj}(b) \bigr|}
      {\bigl\|\Psi_{\F,n}(b)\bigr\|\cdot \bigl\|\Psi_{\F,n}(b')\bigr\|}
  \le \frac{C_2^4}{\d}\|b-b'\| \\
  \text{for all $b,b'\in\bar{B}(a,\d)$, all $0\le i,j\le N$, and all $n\ge0$.}
\end{multline*}
Now we take the maximum over all $0\le i,j\le N$ and use the definition
of the chordal distance and the isometry~\eqref{eqn:isomisomsBadDdP}.
This gives
\begin{multline*}
  \dist\bigl(\s\bigl(\Psi_{\F,n}(b)\bigr),\s\bigl(\Psi_{\F,n}(b')\bigr)\bigr)
  \le \frac{C_2^4}{\d} \dist\bigl(\s(b),\s(b')\bigr) \\
  \text{for all $b,b'\in\bar{B}(a,\d)$ and all $n\ge0$.}
\end{multline*}
>From the definitions we have~$\s\bigl(\Psi_{\F,n}(b)\bigr)=\f^n(\s(b))$ and
similarly for~$b'$, so letting~$\s(b)=Q$ and~$\s(b')=R$, we have
proven that
\[
  \dist\bigl(\f^n(Q),\f^n(R)\bigr) \le \frac{C_2^4}{\d}\dist(Q,R)
  \quad\text{for all $Q,R\in\bar{D}_\d(P)$ and all $n\ge0$.}
\]
Hence the iterates of~$\f$ are uniformly Lipschitz on the disk~$\bar{D}_\d(P)$,
since the Lipschitz constant~$C_2^4/\d$ depends only on~$P$ and~$\f$.
\end{proof}

Theorem~\ref{thm:locally:const:and:F} has a number of useful
corollaries.  We note that it is possible to prove these corollaries
directly from the definition of the Fatou set, but the use of the
Green function simplifies and unifies the proofs. The first is
actually a restatement of part of
Theorem~\ref{thm:locally:const:and:F}, but we feel that it is
sufficiently important to merit the extra attention.  This is
particularly true because some authors define the nonarchimedean Fatou
set in terms of equicontinuity and others define it in terms of
uniform continuity. The following corollary shows that the two
definitions are equivalent, and indeed they are also equivalent to the
stronger locally uniformly Lipschitz property.

\begin{corollary}
Let~$\f:\PP_K^N\to\PP_K^N$ be a morphism of degree~$d\ge2$.
Then~$\{\f^n\}_{n\ge0}$ is locally uniformly Lipschitz on its Fatou
set~$\Fatou(\f)$.  In other words, for every~$P\in\Fatou(\f)$ there
exists a~$\d=\d(\f,P)>0$ and a constant~$C=C(\f,P)$ so that
\[
  \dist\bigl(\f^n(Q),\f^n(R)\bigr) \le C \dist(Q,R)
  \quad\text{for all $Q,R\in\bar{D}_\d(P)$ and all $n\ge0$.}
\]
\end{corollary}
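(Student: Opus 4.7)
The plan is to observe that this corollary is essentially an immediate restatement of the implication (a)$\Rightarrow$(b) in Theorem~\ref{thm:locally:const:and:F}, applied pointwise on $\Fatou(\f)$. No new work is needed.

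Concretely, I would proceed as follows. Fix $P \in \Fatou(\f)$. By the definition of the Fatou set, the family $\{\f^n\}_{n\ge0}$ is equicontinuous on some open neighborhood of $P$, so in particular $P$ satisfies condition~(a) of Theorem~\ref{thm:locally:const:and:F}. Invoking the equivalence (a)$\Leftrightarrow$(b) in that theorem, condition~(b) holds at $P$, which by the very definition of locally uniformly Lipschitz means there exist $\d = \d(\f,P) > 0$ and a constant $C = C(\f,P)$ with
\[
  \dist\bigl(\f^n(Q),\f^n(R)\bigr) \le C \dist(Q,R)
  \qquad \text{for all } Q,R \in \bar{D}_\d(P) \text{ and all } n \ge 0.
\]
This is exactly the statement of the corollary.

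There is no real obstacle here, since all of the substantive work — passing from equicontinuity to local uniform Lipschitz continuity via the constancy of the Green function $\mG_\F$ — has already been done inside Theorem~\ref{thm:locally:const:and:F} (specifically, in the chain (a)$\Rightarrow$(c)$\Rightarrow$(b) using Theorem~\ref{theorem:g:Lipschitz}, the telescoping identity in Proposition~\ref{prop:modgreenfuncprops}(c), and the Gauss-norm estimate from Lemma~\ref{lemma:upper:bound}). The corollary is therefore packaging: for emphasis, one should remark that while the constants $\d$ and $C$ are allowed to depend on~$P$, the same Lipschitz constant $C$ controls \emph{every} iterate $\f^n$ on the disk~$\bar{D}_\d(P)$, which is the content worth highlighting in comparison to the pointwise Lipschitz continuity of individual morphisms provided by Theorem~\ref{theorem:f:Lipschitz}.
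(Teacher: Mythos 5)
Your proof is correct and matches the paper's own argument exactly: the paper likewise dispatches this corollary by simply citing the implication (a)$\Rightarrow$(b) of Theorem~\ref{thm:locally:const:and:F}. Your additional unpacking (fixing $P\in\Fatou(\f)$, noting (a) holds by the definition of $\Fatou(\f)$, and reading off (b)) is accurate and just spells out the one-line citation.
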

\begin{proof}
This is the implication \text{(a) ${}\Longrightarrow{}$ (b)} in
Theorem~\ref{thm:locally:const:and:F}.
\end{proof}

The complete invariance of the Fatou and Julia sets is also an easy
corollary of Theorem~\ref{thm:locally:const:and:F} and the fact
that~$\f$ is an open mapping.

\begin{corollary}
The Fatou set $\Fatou(\f)$ and the Julia set $\Julia(\f)$ are
completely invariant under $\f$.
\end{corollary}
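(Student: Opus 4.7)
The plan is to prove complete invariance of $\Fatou(\f)$ using the characterization from Theorem~\ref{thm:locally:const:and:F} (Fatou = locally constant locus of~$\mG_\F$) together with three ingredients: the functional equation $\mG_\F(\f(Q)) = d\mG_\F(Q) - d g_\F(Q)$ from Proposition~\ref{prop:modgreenfuncprops}(b); the fact from Theorem~\ref{theorem:g:Lipschitz} that $g_\F$ is uniformly locally constant; and the openness of $\f$ from Corollary~\ref{cor:fisopen}. Complete invariance of $\Julia(\f)$ will then follow for free by passing to complements, since $\Julia(\f) = \PP^N(K)\setminus\Fatou(\f)$ and $\f$ is surjective on $\PP^N(K)$.

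For forward invariance, take $P\in\Fatou(\f)$. By Theorem~\ref{thm:locally:const:and:F}, there is an open $U\ni P$ on which $\mG_\F$ is constant; shrinking $U$ if necessary, we may also assume $g_\F$ is constant on $U$, because Theorem~\ref{theorem:g:Lipschitz} guarantees $g_\F$ is constant on any disk of radius less than $|\Resultant(\f)|$. The functional equation then shows that $\mG_\F\circ\f$ is constant on~$U$, so $\mG_\F$ takes a single value on the image $\f(U)$. By Corollary~\ref{cor:fisopen}, $\f(U)$ is an open neighborhood of $\f(P)$, so $\mG_\F$ is locally constant at $\f(P)$, giving $\f(P)\in\Fatou(\f)$.

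For backward invariance, let $P\in\f^{-1}(\Fatou(\f))$, so $\f(P)\in\Fatou(\f)$ and $\mG_\F$ is constant on some open $V\ni\f(P)$. Since $\f$ is continuous (which follows from Lipschitz continuity, Theorem~\ref{theorem:f:Lipschitz}), there is an open $U\ni P$ with $\f(U)\subset V$; shrinking $U$ further we may take $g_\F$ constant on $U$ as above. Rewriting the functional equation as
\[
\mG_\F(Q) = \frac{1}{d}\mG_\F\bigl(\f(Q)\bigr) + g_\F(Q),
\]
both terms on the right are constant in $Q\in U$, so $\mG_\F$ is constant on~$U$, proving $P\in\Fatou(\f)$. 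Together these give $\f(\Fatou(\f))\subset\Fatou(\f)$ and $\f^{-1}(\Fatou(\f))\subset\Fatou(\f)$; since $\f$ is surjective, these inclusions upgrade to $\f^{-1}(\Fatou(\f))=\Fatou(\f)=\f(\Fatou(\f))$, i.e., complete invariance.

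No step is really a serious obstacle here — the whole argument is a two-line manipulation of the functional equation, with the only subtle point being to remember to shrink $U$ to a radius below $|\Resultant(\f)|$ so that $g_\F$ is actually constant (not merely Lipschitz). Complete invariance of $\Julia(\f)$ then follows immediately by taking complements and using surjectivity of $\f$.
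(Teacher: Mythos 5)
Your proof is correct and takes essentially the same route as the paper: both arguments rest on the characterization of $\Fatou(\f)$ as the locally constant locus of $\mG_\F$ (Theorem~\ref{thm:locally:const:and:F}), the functional equation relating $\mG_\F$ and $\mG_\F\circ\f$ via $g_\F$, the uniform local constancy of $g_\F$ on disks of radius below $|\Resultant(\f)|$ (Theorem~\ref{theorem:g:Lipschitz}), and the open mapping property (Corollary~\ref{cor:fisopen}). The only cosmetic difference is the order in which forward and backward invariance are treated.
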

\begin{proof}
Since the Julia set is the complement of the Fatou set, it suffices to
prove the invariance of~$\Fatou(\f)$ under~$\f$ and~$\f^{-1}$.
\par
Let $P \in\f^{-1}\bigl(\Fatou(\f)\bigr)$.
Theorem~\ref{thm:locally:const:and:F}
says that the Green function~$\mG_\F$ is
constant on some disk $\bar{D}_\e\bigl(\f(P)\bigr)$.  Since $\f$ is
continuous, we can find a~$\d$ satisfying
\[
  0<\d<\bigl|\Resultant(\f)\bigr|
  \qquad\text{and}\qquad
  \f\bigl(\bar{D}_\d(P)\bigr) \subset \bar{D}_\e\bigl(\f(P)\bigr).
\]
Then by assumption,~$\mG_\F$ is constant on the
set~$\f\bigl(\bar{D}_\d(P)\bigr)$.  We claim that~$\mG_\F$ is constant
on~$\bar{D}_\d(P)$.
\par
Proposition~\ref{prop:modgreenfuncprops} tells us that
the Green function~$\mG_\F$ satisfies the transformation property
\begin{equation}
  \label{eqn:mGQ1dmGfQgQ}
  \mG_\F(Q) = \frac{1}{d}\mG_\F\bigl(\f(Q)\bigr) + g_\F(Q),
\end{equation}
where~$g_\F$ is the function defined in
Theorem~\ref{theorem:g:Lipschitz}.  And we know that the
function~$\mG_\F\circ\f$ is constant on~$\bar{D}_\d(P)$. But
Theorem~\ref{theorem:g:Lipschitz} says that~$g_\F$ is also
constant on that disk since we have
chosen~$\d<\bigl|\Resultant(\f)\bigr|$. This proves that~$\mG_\F$ is
constant in a neighborhood of~$P$, so
Theorem~\ref{thm:locally:const:and:F} tells us that~$P\in\Fatou(\f)$.
Hence $\f^{-1}\bigl(\Fatou(\f)\bigr) \subseteq \Fatou(\f)$.
\par
For the other direction, let $P \in \Fatou(\f)$.
Theorem~\ref{thm:locally:const:and:F} says that we can find a $0 < \d
< \bigl|\Resultant(\f)\bigr|$ such that $\mG_\F$ is constant on
$\bar{D}_\d(P)$.  Since $\f$ is an open mapping
(Corollary~\ref{cor:fisopen}), there is an $\e >0$ satisfying
\[
  \bar{D}_\e\bigl(\f(P)\bigr) \subset \f\bigl(\bar{D}_\d(P)\bigr).
\]
We claim that~$\mG_\F$ is constant on~$\bar{D}_\e\bigl(\f(P)\bigr)$.
\par
For any~$Q\in~\bar{D}_\e\bigl(\f(P)\bigr)$, we write~$Q=\f(R)$
with~$R\in\bar{D}_\d(P)$ and use
the transformation formula~\eqref{eqn:mGQ1dmGfQgQ} to compute
\[
  \mG_\F(Q) 
  = \mG_\F\bigl(\f(R)\bigr) = d\mG_\F(R) - dg_\F(R).
\]
The function~$\mG_\F$ is constant on~$\bar{D}_\d(P)$, and
since~$\d<\bigl|\Resultant(\f)\bigr|$,
Theorem~\ref{theorem:g:Lipschitz} tells us that~$g_\F$ is also constant
on~$\bar{D}_\d(P)$. Hence~$\mG_\F$ is constant
on~$\bar{D}_\e\bigl(\f(P)\bigr)$, so
Theorem~\ref{thm:locally:const:and:F} tells us
that~$\f(P)\in\Fatou(\f)$.  This completes the proof
that~$\f\bigl(\Fatou(\f)\bigr)\subset\Fatou(\f)$, which is the other
inclusion.
\end{proof}

\section{Good reduction and the Fatou set}
\label{section:goodredandfatou}

Roughly speaking, a morphism~$\f:\PP^N_K\to\PP^N_K$ has good
reduction at a point~$P\in\PP^N(K)$ if the reduction of~$\f$ to the
residue field~$k$ of~$K$ is well behaved at the reduction of~$P$.
In this section we show that~$\f$ has good reduction at~$P$ if and only
if~$\f$ is nonexpanding in a neighborhood of~$P$ (whose radius we specify
exactly). We then show how the locus of good reduction for~$\f$ can be
used to describe a subset of the Fatou set~$\Fatou(\f)$. We begin with some
definitions.

\begin{definition}
The morphism $\f$ has {\em good reduction} at $P$ if there is a lift
$\F$ of $\f$ and a lift $x$ of $P$ satisfying 
\begin{equation}
\label{eqn:good:reduction:at:P}
  \Vert x \Vert =1 \quad\text{and}\quad 
  \Vert \F \Vert= 1 \quad\text{and}\quad
  \Vert\F(x)\Vert = 1. 
\end{equation}
We write
\[
  U^{\good}(\f) = 
  \{ P \in \PP^N(K) \;:\; \text{$\f$ has good reduction at $P$}\}
\]
for the set of points at which~$\f$ has good reduction, and we
write $U^{\bad}(\f)$ for the complementary set where~$\f$ has bad
reduction.
\par
We say that~$\f$ has \emph{orbital good reduction} at~$P$ if
$\Ocal_\f(P)\subset U^{\good}(\f)$, i.e., if~$\f$ has good reduction
at every point in the forward orbit of~$P$. We denote the set of
such points by
\[
  U^{\orbital}(\f) =
  \{ P \in \PP^N(K) \;:\; \text{$\f$ has orbital good reduction at $P$}\}.
\]
Equivalently,
\[
  U^{\orbital}(\f) = 
   \smash[t]{\bigcap_{n=0}^{\infty} \f^{-n}(U^{\good}(\f)).}
\]
\end{definition}

\begin{remark}
Since any two lifts of~$\f$ differ by a constant, it is easy to see
that~$\f$ has good reduction at~$P$ if and only if every minimal
lift~$\F$ of~$\f$ and every lift~$x$ of~$P$ satisfying~$\|x\|=1$
also satisfies~$\bigl\|\F(x)\bigr\|=1$.
\end{remark}

\begin{remark}
It follows easily from Proposition~\ref{prop:ResFxdleFlexd} that if~$\f$
has (global) good reduction in the sense of
Remark~\ref{remark:good:reduction}, then $U^{\good}(\f)=\PP^N(K)$.
Conversely, if~$U^{\good}(\f)=\PP^N(K)$, then~$\|\F(x)\|=\|x\|^d$ for
all $x\in(K^{N+1})^*$, i.e.,~$g_\F$ is identically~$0$.
Proposition~\ref{prop:modgreenfuncprops} then implies that~$\mG_\F$ is
identically~$0$, and hence~\cite[Proposition~12]{KSEqualCanHt} tells
us that~$\f$ has (global) good reduction. In conclusion,
$|\Res(\f)|=1$ if and only if~$\f$ has good reduction at every point
of~$\PP^N(K)$.
\end{remark}

\begin{remark}
An alternative way to define $\f$ having good reduction at $P$ is the
existence of a lift~$\Phi$ with $K$-integral coefficients and an~$x$
with $K$-integral coordinates so that the image point $\Phi(x)$ has at
least one coordinate that is a $K$-unit.  This allows us to reduce
modulo the maximal ideal to obtain points defined over the residue
field $k$ of $K$, and we obtain the formulas
\[
  \tilde{\F}(\tilde{x}) 
  = \widetilde{\F(x)} \in \AA^{N+1}(k)\setminus\{0\}
  \qquad\text{and}\qquad
  \tilde{\f}(\tilde{P}) = \widetilde{\f(P)} \in \PP^N(k).
\]
\end{remark}

\begin{remark}
Our ad hoc definition of good reduction is convenient for
calculations, but we note that it is equivalent to the usual scheme
theoretic definition.  Thus let~$R$ be the ring of integers of~$K$ and
let~$k$ be the residue field. A point~$P\in\PP^N(K)$ induces a
section~\text{$s_P:\Spec(R)\to\PP^N_R$}, and we
write~$\tilde{P}=s_P\bigl(\Spec(k)\bigr)$ for the intersection of the
section with the special fiber~$\PP^N_k\subset\PP^N_R$.  Then~$\f$ has
good reduction at~$P$ if there is a rational
map~$\bar\f:\PP^N_R\to\PP^N_R$ whose restriction to the generic
fiber~$\PP^N_K$ is~$\f$ and such that~$\bar\f$ is defined
at~$\tilde{P}$.
\end{remark}

\begin{proposition}
\label{prop:goodredprops}
Let~$\f:\PP^N_K\to\PP^N_K$ be a morphism of degree~$d\ge2$,
let~$\F$ be a minimal lift of~$\f$,
and let~$P\in\PP^N(K)$. Consider the following five statements.
\begin{parts}
\Part{(a)}
$P\in U^{\good}(\f)$.
\vspace{1\jot}
\Part{(b)}
$D_{|\Resultant(\f)|}(P) \subset U^{\good}(\f)$.
\vspace{1\jot}
\Part{(c)}
$g_\F(P)=0$.
\vspace{1\jot}
\Part{(d)}
$\dist\bigl(\f(Q),\f(R)\bigr) \le \dist(Q,R)$
   for all $Q,R\in D_{|\Resultant(\f)|}(P)$.
\vspace{1\jot}
\Part{(e)}
$\f$ is nonexpanding in some neighborhood of~$P$.
\end{parts}
Then we have the following implications:
\begin{equation}
  \label{eqn:implications}
  \textup{(a)}
    \Longleftrightarrow
  \textup{(b)}
    \Longleftrightarrow
  \textup{(c)}
    \Longrightarrow
  \textup{(d)}
    \Longrightarrow
  \textup{(e)}
\end{equation}
In particular,~$U^{\good}(\f)$ is an open set.
\end{proposition}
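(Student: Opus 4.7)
The plan is to first establish the equivalence (a)$\Leftrightarrow$(c) by unpacking the good reduction condition in terms of the function $g_\F$, and then to derive all remaining implications and the openness of $U^{\good}(\f)$ from the uniform local constancy of $g_\F$ supplied by Theorem~\ref{theorem:g:Lipschitz}. Both the good reduction condition and the Lipschitz-type bounds from the proof of Theorem~\ref{theorem:f:Lipschitz} become very clean statements about unit-norm lifts once we use that $\F$ is a minimal lift, i.e.\ $\|\F\|=1$.

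First I would prove (a)$\Leftrightarrow$(c). Homogeneity of $g_\F$ lets us normalize a lift $x$ of $P$ to have $\|x\|=1$, and then $g_\F(P)=d^{-1}\log\|\F(x)\|$. Proposition~\ref{prop:ResFxdleFlexd} gives $\|\F(x)\|\le 1$, so $g_\F(P)\le 0$ with equality iff $\|\F(x)\|=1$, which combined with $\|\F\|=\|x\|=1$ is precisely condition~\eqref{eqn:good:reduction:at:P}. Next, (c)$\Rightarrow$(b) is immediate from the uniform local constancy~\eqref{eqn:g:locconst}: every $Q\in D_{|\Resultant(\f)|}(P)$ satisfies $g_\F(Q)=g_\F(P)=0$, so the already-established implication (c)$\Rightarrow$(a) applied at $Q$ gives $Q\in U^{\good}(\f)$. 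The implication (b)$\Rightarrow$(a) is trivial since $P\in D_{|\Resultant(\f)|}(P)$, and the openness of $U^{\good}(\f)$ then follows from (a)$\Rightarrow$(b), because every good reduction point has a whole disk neighborhood contained in $U^{\good}(\f)$.

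For (c)$\Rightarrow$(d) I would again invoke Theorem~\ref{theorem:g:Lipschitz} to get $g_\F\equiv 0$ on $D_{|\Resultant(\f)|}(P)$, so every $Q$ in this disk has a normalized lift $y$ with $\|\F(y)\|=1$. For any $Q,R$ in this disk with normalized lifts $y,z$, inequality~\eqref{eqn:FiFjFjFi} from the proof of Theorem~\ref{theorem:f:Lipschitz} (which uses only $\|\F\|=\|y\|=\|z\|=1$) gives $|\F_i(y)\F_j(z)-\F_j(y)\F_i(z)|\le\dist(Q,R)$, while the denominator $\|\F(y)\|\cdot\|\F(z)\|$ in the chordal distance formula equals $1$; taking the maximum over $i,j$ yields $\dist(\f(Q),\f(R))\le\dist(Q,R)$. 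Finally, (d)$\Rightarrow$(e) is immediate since $D_{|\Resultant(\f)|}(P)$ is a neighborhood of $P$. The only ``obstacle'' here is bookkeeping: the good reduction condition is defined via the existence of \emph{some} lift satisfying~\eqref{eqn:good:reduction:at:P}, whereas $g_\F$ is essentially lift-independent, so one has to check consistency when passing back and forth between the two viewpoints. All the genuine analytic content is already packaged in Proposition~\ref{prop:ResFxdleFlexd} and Theorem~\ref{theorem:g:Lipschitz}.
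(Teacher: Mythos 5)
Your proof is correct and follows essentially the same route as the paper: both proofs normalize to unit-norm lifts, use Proposition~\ref{prop:ResFxdleFlexd} and the uniform local constancy from Theorem~\ref{theorem:g:Lipschitz} to handle the disk statements, and reuse inequality~\eqref{eqn:FiFjFjFi} for the nonexpansion in~(d). The only difference is cosmetic: you establish (a)$\Leftrightarrow$(c) first and derive the disk statements from it, whereas the paper proves (a)$\Rightarrow$(b) and (a)$\Rightarrow$(d) directly and saves (a)$\Leftrightarrow$(c) for last.
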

\begin{proof}
It is clear that~(b) implies~(a) and~(d) implies~(e).  For the
remainder of this proof we fix a lift~$x$
of~$P$ satisfying~$\|x\|=1$.
\par
We first prove that~(a) implies~(b), so let~$P\in U^{\good}(\f)$.  The
good reduction condition \eqref{eqn:good:reduction:at:P} tells us
that~$\bigl\|\F(x)\bigr\|=1$.  Now let~$Q \in D_{|\Resultant(\f)|}(P)$
and choose a lift~$y$ of~$Q$ satisfying $\Vert y \Vert =1$. Then
Theorem~\ref{theorem:g:Lipschitz} tells us that~$\Vert \F(x) \Vert =
\Vert \F(y) \Vert$, so~$y$ also satisfies the good reduction
conditions \eqref{eqn:good:reduction:at:P}.  Hence $Q \in
U^{\good}(\f)$.
\par
We next prove that~(a) implies~(d), so let~$P\in U^{\good}(\f)$ and
let~$Q,R\in D_{|\Resultant(\f)|}(P)$. In particular, since we already
proved that~(a) implies~(b), we see that~$Q,R\in U^{\good}(\f)$.
Hence if we choose lifts~$y$ of~$Q$ and~$z$ of~$R$
satisfying~$\|y\|=\|z\|=1$, then the definition of good reduction
implies that
\[
  \bigl\|\F(y)\bigr\| = \bigl\|\F(z)\bigr\| = 1.
\]
Writing $\F = (\F_0,\ldots,\F_N)$, we proved earlier
(see~\eqref{eqn:FiFjFjFi} in the proof of
Theorem~\ref{theorem:f:Lipschitz}) that
\begin{equation}
  \left\vert \F_i(y)\F_j(z) - \F_j(y)\F_i(z)\right\vert 
  \leq \D(Q, R).
\end{equation}
Dividing by $\bigl\|\F(y)\bigr\| = \bigl\|\F(z)\bigr\| = 1$ yields
$\D\bigl(\f(Q), \f(R)\bigr) \leq \D(Q, R)$.
\par
It remains to show that~(a) and~(c) are equivalent. By definition, the
function~$g_\F$ is given by
\[
  g_\F(P) = \frac{1}{d}\log\bigl\|\F(x)\bigr\| - \log\|x\|.
\]
We have normalized~$x$ to satisfy~$\|x\|=1$ and by definition,~$\f$
has good reduction at~$P$ if and only if~$\bigl\|\F(x)\bigr\|=1$.
Hence~$P\in U^{\good}(\f)$ if and only if~$g_\F(P)=0$.
\par
This completes the proof of the implications~\eqref{eqn:implications}
Finally, it is clear from $\text{(a)}\Rightarrow\text{(b)}$
that~$U^{\good}(\f)$ is an open set.
\end{proof}


We conclude with a proposition describing the set of points of
orbital good reduction.

\begin{proposition}
\label{prop:G0orbitalFatou}
Let~$\f:\PP^N_K\to\PP^N_K$ be a morphism of degree~$d\ge2$ and
let~$\F$ be a minimal lift of~$\f$. 
\begin{parts}
\Part{(a)}
Let~$P\in U^{\orbital}(\f)$ and let~$Q,R\in D_{|\Resultant(\f)|}(P)$.
Then
\begin{equation}
  \label{eqn:DfnQfnR}
  \dist(\f^n(Q), \f^n(R)) \leq \dist(Q, R) 
  \quad\text{for all $n \geq1$}.
\end{equation}
\Part{(b)}
Let~$P\in U^{\orbital}(\f)$. Then
\begin{equation}
  \label{eqn:DRfPUorb}
  D_{|\Resultant(\f)|}(P) \subset U^{\orbital}(\f).
\end{equation}
In particular,~$U^{\orbital}(\f)$ is an open set.
\Part{(c)}
\leavevmode\vspace{-10pt}
\begin{equation}
  \label{eqn:UorfPGFP0}
  U^{\orbital}(\f)
  =\bigl\{P\in\PP^N(K) : \mG_\F(P)=0 \bigr\}
  \subseteq \Fatou(\f). 
\end{equation}
\Part{(d)}
$\mG_\F$ is strictly negative on~$U^{\bad}(\f)$.
\vspace{1\jot}
\Part{(e)}
The set $\bigl\{P\in\PP^N(K) : \mG_\F(P)=0\bigr\}$ is
an open set.
\end{parts}
\end{proposition}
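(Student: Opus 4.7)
The plan is to prove part (a) first by an induction on $n$ that propagates the nonexpansion statement from Proposition~\ref{prop:goodredprops}(d) along the orbit, and then to deduce (b)--(e) as relatively direct consequences. The main issue in (a) is verifying that a small ball around $P$ stays inside the radius $|\Resultant(\f)|$ ball around $\f^n(P)$ under iteration, so that at each step we can apply the nonexpansion property from Proposition~\ref{prop:goodredprops}(a)$\Rightarrow$(d) at the point $\f^n(P)$.

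For (a), note first that $P\in U^{\good}(\f)$ since $P\in U^{\orbital}(\f)$. Thus by Proposition~\ref{prop:goodredprops}(a)$\Rightarrow$(d), the map $\f$ is nonexpanding on $D_{|\Resultant(\f)|}(P)$. I would then argue by induction. Assume $\dist(\f^k(Q),\f^k(R))\le\dist(Q,R)<|\Resultant(\f)|$ for $0\le k\le n$, and also that $\f^k(Q),\f^k(R)\in D_{|\Resultant(\f)|}(\f^k(P))$ (the latter follows from the same reasoning applied with $R$ replaced by $P$, since orbital good reduction at $P$ means $\f^k(P)\in U^{\good}(\f)$). Now $\f^n(P)\in U^{\good}(\f)$ too, so Proposition~\ref{prop:goodredprops}(a)$\Rightarrow$(d) applied at $\f^n(P)$ gives $\dist(\f^{n+1}(Q),\f^{n+1}(R))\le\dist(\f^n(Q),\f^n(R))\le\dist(Q,R)$, and likewise $\f^{n+1}(Q),\f^{n+1}(R)\in D_{|\Resultant(\f)|}(\f^{n+1}(P))$, closing the induction.

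For (b), given $Q\in D_{|\Resultant(\f)|}(P)$, the inequality \eqref{eqn:DfnQfnR} (with $R=P$) proved in (a) yields $\dist(\f^n(Q),\f^n(P))<|\Resultant(\f)|$ for all $n$. Since $\f^n(P)\in U^{\good}(\f)$, Proposition~\ref{prop:goodredprops}(a)$\Rightarrow$(b) gives $\f^n(Q)\in U^{\good}(\f)$, so $Q\in U^{\orbital}(\f)$; openness of $U^{\orbital}(\f)$ follows immediately. For (c), the equivalence $\{\mG_\F=0\}=U^{\orbital}(\f)$ is a chaining of Proposition~\ref{prop:modgreenfuncprops}(d) (which reduces $\mG_\F(P)=0$ to $g_\F(\f^n(P))=0$ for all $n\ge0$) with the equivalence (a)$\Leftrightarrow$(c) from Proposition~\ref{prop:goodredprops} (which says $g_\F(\f^n(P))=0$ iff $\f^n(P)\in U^{\good}(\f)$). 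The inclusion $U^{\orbital}(\f)\subseteq\Fatou(\f)$ then follows by combining this equivalence with (b): $\mG_\F$ vanishes identically on the disk $D_{|\Resultant(\f)|}(P)$, hence is locally constant at $P$, so Theorem~\ref{thm:locally:const:and:F}(c)$\Rightarrow$(a) places $P$ in $\Fatou(\f)$.

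For (d), if $P\in U^{\bad}(\f)$ then Proposition~\ref{prop:goodredprops} (a)$\Leftrightarrow$(c) gives $g_\F(P)<0$; combined with the series representation $\mG_\F(P)=\sum_{n\ge0}d^{-n}g_\F(\f^n(P))$ from Proposition~\ref{prop:modgreenfuncprops}(c), whose terms are all nonpositive and whose $n=0$ term is strictly negative, we conclude $\mG_\F(P)<0$. Finally, (e) is immediate from (c) and the openness of $U^{\orbital}(\f)$ established in (b). The main obstacle is the careful bookkeeping in the induction in (a)—tracking both the nonexpansion bound and the containment of iterates in the correct disks of radius $|\Resultant(\f)|$ simultaneously—but everything else is a straightforward repackaging of results already proved.
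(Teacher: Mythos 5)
Your proposal is correct and follows essentially the same strategy as the paper: an induction on $n$ that propagates nonexpansion from Proposition~\ref{prop:goodredprops}(d) along the orbit for part (a), followed by direct deductions for (b)--(e). The only cosmetic differences are that you run a single induction carrying both the nonexpansion bound and the disk containment as hypotheses (the paper does it in two passes, first with $R=P$ and then for general $Q,R$ via (b)), your (c) Fatou inclusion routes through Theorem~\ref{thm:locally:const:and:F} ($\mG_\F$ locally constant $\Rightarrow$ Fatou) whereas the paper invokes nonexpansion-implies-equicontinuity directly, and your (d) isolates the strictly negative $n=0$ term of the series rather than chaining through (c); none of these alter the substance of the argument.
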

\begin{proof}
(a,b)
Let~$P\in U^{\good}(\f)$.
We first use induction on~$n$ to prove~\eqref{eqn:DfnQfnR}
with~$R=P$. It is clearly true for~$n=1$.  Let~$Q\in
D_{|\Resultant(\f)|}(P)$ and assume that~\eqref{eqn:DfnQfnR}
with~$R=P$ is true for~$n$.  Then in particular we have
\[
  \dist(\f^n(Q), \f^n(P)) \leq \dist(Q,P) \leq \bigl|\Resultant(\f)\bigr|.
\]
Thus~$\f^n(Q)\in D_{|\Resultant(\f)|}\bigl(\f^n(P)\bigr)$,
and we know that~$\f^n(P)\in U^{\good}(\f)$, so
Proposition~\ref{prop:goodredprops}(d) tells us that
\[
  \dist\bigl(\f^{n+1}(Q),\f^{n+1}(P)\bigr)
  \le \dist\bigl(\f^{n}(Q),\f^{n}(P)\bigr).
\]
Then the induction hypothesis gives
$\dist\bigl(\f^{n+1}(Q),\f^{n+1}(P)\bigr)\le\dist(Q,P)$.
This proves that~\eqref{eqn:DfnQfnR} is true for~$R=P$.
\par
In particular, we have shown that if~$Q\in D_{|\Resultant(\f)|}(P)$,
then
\[
  \f^n(Q)\in D_{|\Resultant(\f)|}\bigl(\f^n(P)\bigr)
  \quad\text{for all $n\ge0$.}
\]
By assumption we have~$\f^n(P)\in U^{\good}$, so
Proposition~\ref{prop:goodredprops}(b) implies that~$\f^n(Q)\in
U^{\good}(\f)$. This holds for all~$n\ge0$, 
hence~\text{$Q\in U^{\orbital}(\f)$}, which proves the
inclusion~\eqref{eqn:DRfPUorb}.  And clearly~\eqref{eqn:DRfPUorb}
implies that~$U^{\orbital}(\f)$ is an open set.
\par
We now show that~(a) is true for all~$Q,R\in D_{|\Resultant(\f)|}(P)$.
From~(b) we have~$Q\in U^{\orbital}(\f)$. Further,
\[
  \dist(R,Q) \le \max\bigl\{ \dist(R,P),\dist(Q,P) \bigr\}
  \le |\Resultant(\f)|,
\]
so~$R\in D_{|\Resultant(\f)|}(Q)$. Hence we can apply
our preliminary version of~(a) to the point~$Q\in U^{\orbital}(\f)$
and the point~$R\in D_{|\Resultant(\f)|}(Q)$ to deduce that
\[
  \dist(\f^n(Q), \f^n(R)) \leq \dist(Q, R) 
  \quad\text{for all $n \geq1$}.
\]
\par\noindent(c)\enspace
We have
\begin{align*}
  \mG_\F(P)=0
  &\Longleftrightarrow
  g_\F\bigl(\f^n(P)\bigr)=0
    &&\text{for all~$n\ge0$ (Proposition~\ref{prop:modgreenfuncprops}),} \\
  &\Longleftrightarrow
  \f^n(P)\in U^{\good}(\f)
    &&\text{for all~$n\ge0$ (Theorem~\ref{prop:goodredprops}),} \\
  &\Longleftrightarrow
  P \in U^{\orbital}(\f).
\end{align*}
This proves the lefthand equality in~\eqref{eqn:UorfPGFP0}
\par
Next 
let~$P\in U^{\orbital}(\f)$. Then~(a) says that the iterates of~$\f$
are nonexpanding on the disk $D_{|\Resultant(\f)|}(P)$.  This is much
stronger than the assertion that~$\f$ is equicontinuous at every point
in the disk.  Hence~\text{$P\in\Fatou(\f)$}. This completes the
proof that \text{$U^{\orbital}(\f)\subset\Fatou(\f)$}.
\par\noindent(d)\enspace
From~(c) we see that
\[
  P\in U^{\bad}(\f)
  \Longleftrightarrow
  P\notin U^{\good}(\f)
  \Longrightarrow
  P\notin U^{\orbital}(\f)
  \Longrightarrow
  \mG_\F(P)\ne0.
\]
However, Proposition~\ref{prop:modgreenfuncprops} tells us that~$\mG_\F$ is
nonpositive, so~$\mG_\F(P)\ne0$ is equivalent to~$\mG_\F(P)<0$.
\par\noindent(e)\enspace
This is immediate from~(b) and~(c).
\end{proof}



\end{document}

EXTRA UNUSED STUFF

We begin by showing that the Fatou and Julia sets are forward and
backward invariant for~$\f$. We give a direct proof using the
definition of equicontinuity, but later we will give a shorter
proof using the theory of Green functions.

\begin{corollary}
\label{corollary:FJinvariant}
Let~$\f:\PP^N\to\PP^N$ be a finite morphism of degree~$d$ defined over~$K$.
The Fatou set~$\Fatou(\f)$ and the Julia set~$\Julia(\f)$ are forward and
backward invariant under~$\f$.
\end{corollary}
\begin{proof}
It suffices to prove that~$\Fatou(\f)$ is forward and backward invariant,
since~$\Julia(\f)=\PP^N(K)\setminus\Fatou(\f)$.
\par
Let~$P\in\Fatou(\f)$ and fix any~$\e>0$. Since~$\Fatou(\f)$ is open by
definition, we can fix some~$0<r<1$ with~$\bar{D}_r(P)\subset\Fatou(\f)$.
Then by definition of~$\Fatou(\f)$, the map~$\f$
is equicontinuous  at every point~$Q\in\bar{D}_r(P)$. Hence
for each such~$Q$ we can find a~$\d_Q>0$ so that
\begin{multline}
  \label{eqn:dQQd0}
  \dist(Q,Q') \le \d_Q
  \quad\Longrightarrow\quad
  \dist\bigl(\f^n(Q),\f^n(Q')\bigr) \le \e  \\
  \text{for all $Q'$ and all $n\ge1$.}
\end{multline}
If necessary, we decrease~$\d_Q$ so that it satisfies~$\d_Q\le\e$.
\par
We first verify that~$\f$ is equicontinuous in a neighborhood
of~$\f(P)$. Note that~$\f(\bar{D}_r(P))$ is a neighborhood of~$\f(P)$
from Corollary~\ref{cor:fisopen}, so in particular it contains some
disk~$\bar{D}_s\bigl(\f(P)\bigr)$.  Decreasing~$s$ if necessary, we
may assume that~$s<(r/c)^d$, where~$c$ is the constant appearing
in~\eqref{eqn:minQfR}.  We will verify that~$\f$ is equicontinuous
at every point~$T\in\bar{D}_s\bigl(\f(P)\bigr)$.
\par
Using Proposition~\ref{prop:distributionrelation},
we find a point~$Q\in\f^{-1}(T)$ satisfying
\[
  \dist(P,Q) \le c\dist(\f(P),T)^{1/d} \le cs^{1/d} \le r,
\]
and we let $\d_T=\min\bigl\{(\d_Q/c)^d,r\bigr\}$.
Then for any point~$T'\in\bar{D}_s\bigl(\f(P)\bigr)$ satisfying
$\dist(T,T')\le\d_T$, we apply
Proposition~\ref{prop:distributionrelation} a second time to find a
point~$Q'\in\f^{-1}(T')$ satisfying
\[
  \dist(Q,Q') 
  \le c \dist(\f(Q),T')^{1/d}
  = c \dist(T,T')^{1/d}
  \le c \d_T^{1/d}
  \le \d_Q.
\]
In particular, 
\[
  \dist(P,Q')
  \le \max\bigl\{\dist(P,Q),\dist(Q,Q')\bigr\}
  \le \max\{r,\d_Q\}
  \le r,
\]
so~$Q'$ is also in~$\bar{D}_r(P)$.
\par
To recapitulate, the points~$Q,Q'\in\bar{D}_r(P)$
satisfy \text{$\dist(Q,Q')\le\d_Q$}. It follows from~\eqref{eqn:dQQd0}
that
\[
  \dist\bigl(\f^n(Q),\f^n(Q')\bigr) \le \e  
  \qquad\text{for all $n\ge1$.}
\]
Since~$\f(Q)=T$ and~$\f(Q')=T'$, this obviously implies that
\[
  \dist\bigl(\f^n(T),\f^n(T')\bigr) \le \e  
  \qquad\text{for all $n\ge1$,}
\]
which completes the proof that~$\f$ is equicontinuous
at every point \text{$T\in\bar{D}_s\bigl(\f(P)\bigr)$}.
Hence~$\f\bigl(\Fatou(\f)\bigr)\subseteq\Fatou(\f)$,
i.e.,~$\Fatou(\f)$ is forward invariant.
\par
The proof that~$\Fatou(\f)$ is backward invariant is similar, but
easier, using the fact that~$\f$ is Lipschitz
(Theorem~\ref{theorem:f:Lipschitz}).  We omit the proof.
\end{proof}